\title[Irreducibility and locus of complex roots of polynomials related to FLT]{Irreducibility and locus of complex roots of polynomials related to Fermat's Last Theorem}
\author{Hayk Karapetyan}
\address{Institute of Mathematics, National Academy of Sciences, Yerevan, Armenia and Faculty of Mathematics and Mechanics, Yerevan State University, Yerevan, Armenia}
\email{hayk.karapetyan6@edu.ysu.am}
\author{Ruben Hambardzumyan}
\address{Faculty of Mathematics and Mechanics, Yerevan State University, Yerevan, Armenia}
\email{rubenhambardzumyan@ysu.am}
\thanks{The work of the first author was supported by the Higher Education and Science Committee of Republic of Armenia (Research Project No 24RL-1A028).}
\thanks{The second author was supported by the Science Committee of Republic of Armenia (Research project No 23RL-1A027).}
\subjclass[2020]{11R09, 12D10}
\keywords{Fermat's Last Theorem, irreducible polynomials}
\renewcommand{\le}{\leqslant}
\renewcommand{\leq}{\le}
\renewcommand{\ge}{\geqslant}
\renewcommand{\geq}{\ge}
\renewcommand{\Re}{\text{Re }}
\newtheorem{theorem}{Theorem}[section]
\newtheorem{lemma}[theorem]{Lemma}
\newtheorem{proposition}[theorem]{Proposition}
\newtheorem{corollary}[theorem]{Corollary}
\newtheorem{question}[theorem]{Question}
\theoremstyle{definition}
\newtheorem{definition}[theorem]{Definition}
\newtheorem{example}[theorem]{Example}
\theoremstyle{remark}
\newtheorem{remark}[theorem]{Remark}
\numberwithin{equation}{section}
\numberwithin{theorem}{section}
\newcommand{\ZZ}{\mathbb{Z}}
\newcommand{\QQ}{\mathbb{Q}}
\newcommand{\FF}{\mathbb{F}}
\newcommand{\CC}{\mathbb{C}}
\newcommand{\RR}{\mathbb{R}}
\DeclareMathOperator{\cont}{cont}
\DeclareMathOperator{\disc}{disc}
\DeclareMathOperator{\Gal}{Gal}
\DeclareMathOperator{\res}{res}
\DeclareMathOperator{\im}{im}
\begin{document}
	
	\begin{abstract}
		We study the polynomials \(x^n + (1-x)^n + a^n, a \in\QQ\), whose rational roots would yield counterexamples to Fermat’s Last Theorem. We investigate their factorization over $\QQ$. In the case $a \notin \{0, \pm 1\}$, we ask whether they are irreducible over $\QQ$, prove the irreducibility for several infinite families, and investigate the location of the roots of these polynomials on the complex plane. For $a=\pm1$, the factorization of $K_{a,n}$ is intimately related to that of the Cauchy--Mirimanoff polynomials $E_n$ and the polynomials $T_n$ and $S_n$ introduced by P. Nanninga. After removing the trivial factors $x$, $x-1$, and $x^2-x+1$, the remaining components agree (up to change of variable) with $E_n$, $S_n$, or $T_n$. We prove several new irreducibility results for these factors.
	\end{abstract}

    \maketitle
	
	\section{Introduction}
	
	In this article, we investigate the polynomials \(K_{a,n}(x) \colonequals x^n+(1-x)^n+a^n\), where $a \in \QQ$ and $n > 1$, which arise naturally from Fermat's Last Theorem (FLT), as the following proposition shows:
    \begin{proposition}
        The following are equivalent:
        \begin{enumerate}[label=(\roman*)]
            \item The polynomial $K_{a, n}$ has a rational root for some rational $a \neq -1$ and some integer $n > 1$.

            \item There is an $m > 2$ for which the Fermat equation $X^m + Y^m = Z^m$ has a solution in integers $X$, $Y$, and $Z$, with $XYZ \neq 0$.
        \end{enumerate}
    \end{proposition}
    \begin{proof}
         We prove (ii)$\implies$(i). There are elementary arguments showing $X^4+Y^4=Z^4$ has no nontrivial integer solutions, so $m$ must have an odd factor $n$. Then, the construction of $a$ is trivial.

         We prove (i)$\implies$(ii) by contrapositive. Suppose FLT holds. Then $K_{a, n}$ has no real roots for even $n$ since it is positive on $\RR$ and has no rational roots for odd $n > 1$ by FLT.
    \end{proof}
	
	FLT is thus equivalent to the fact that \(K_{a,n}\) does not have rational roots if \(a\in\mathbb{Q} \setminus\{-1\}\). One can consider a more general question: what does the irreducible factorization of $K_{a, n}$ over \(\mathbb{Q}\) look like? Computer calculations with SageMath show that \(K_{a,n}\) is irreducible over $\QQ$ for \(a\) and \(n\) such that \(a=\pm\frac{p}{q},a \not \in \{\pm 1, 0\}\), \(0<p,q<200\), and \(n<100\). Therefore, we ask the following question:

    \begin{question} \label{question2}
        For a rational $a \notin \{0, \pm 1\}$, are the polynomials $K_{a, n}$ irreducible over $\QQ$?
    \end{question}

    In Section \ref{sec:anotpm1}, as evidence supporting Question \ref{question2}, we prove the following:

    \begin{theorem} \label{thm:irrevid}
    For a prime $p$, let $v_p$ denote the $p$-adic valuation. Then the following statements hold:
        \begin{enumerate}[label=(\alph*), ref=\thetheorem(\alph*)]
            \item \label{thm:irrevid:part1:sqfree} If $a \neq \pm 1$, then $K_{a, n}$ is square-free for all $n$.
            \item \label{thm:irrevid:part2} If $a \neq \pm 1$, then $K_{a, n}$ does not have roots on the unit circle for all $n$. 
            \item \label{thm:irrevid:part3:v2} If $v_2(a) = -1$, then $K_{a, n}$ is irreducible over $\QQ$ for all $n$. 
            \item \label{thm:irrevid:part4} If $v_p(a) = -1$, $p$ does not divide $n$ for some prime $p$, and $n$ is odd, then $K_{a, n}$ is irreducible over $\QQ$. 
        \end{enumerate}
    \end{theorem}
    
    When \(a=0\), \(K_{a,n}\) is a modified version of the polynomial \(x^n + 1\), which has a well-known factorization. The factorization in this case is completely described in Corollary \ref{cor:factora0}. 
    
    For \(a=\pm1\), it turns out that $K_{a, n}$ may have $x$, $x - 1$, and $x^2 - x + 1$ as factors. After removing these ``trivial'' factors, the remaining polynomial seems to be irreducible. In fact, this case is strongly connected to the Cauchy--Mirimanoff polynomials \(E_n\), as well as the polynomials \(S_n\) and \(T_n\) introduced by P. Nanninga. The polynomials \(E_n\), \(S_n\), and \(T_n\) are defined by the following factorization formulae (see \cite{nanningaarticle}):
    \begin{itemize}
        \item For $n \ge 2$,
        \begin{equation} \label{eq:E}
            (x + 1)^n - x^n - 1 = x(x + 1)^a(x^2 + x + 1)^b E_n(x),
        \end{equation}
        where $a = b = 0$ if $n$ is even; while if $n$ is odd, $a = 1$ and $b = 0, 1, 2$ according as $n \equiv 3, 5, 1 \pmod{6}$.
        \item For $n \ge 1$,
        \begin{equation} \label{eq:S}
        (x + 1)^n - x^n + 1 = (x + 1)^a S_n(x),
        \end{equation}
        where $a = 0$ if $n$ is odd and $a = 1$ if $n$ is even;
        \item For $n \ge 1$, \begin{equation}\label{eq:T}
            (x + 1)^n + x^n + 1 = (x + 1)^a (x^2 + x + 1)^b T_n(x),
        \end{equation}
        where $a = 1$ and $b = 0$ if $n$ is odd; while if $n$ is even, $a = 0$ and $b = 0, 1, 2$ according as $n \equiv 0, 2, 4 \pmod{6}$.
    \end{itemize}
    The polynomials $E_n$, $S_n$, and $T_n$ are defined so that they do not have the factors \(x\), \(x+1\), or \(x^2+x+1\). 
    
    Formula \eqref{eq:E} was first noted by Cauchy for odd $n$ (see \cite{cauchy}). Later, the polynomials $E_n(x)$ were studied for prime $n\ge 11$ and conjectured to be irreducible over $\QQ$ by Mirimanoff (see \cite{Mirimanoff}). Helou noticed that most of the results of Mirimanoff were valid for all odd $n$ and suggested that the irreducibility conjecture holds for all $n \ge 2$ (see \cite{helou}). Later, Nanninga studied the polynomials $S_n$ and $T_n$, and conjectured that they are irreducible (see \cite{nanningaarticle}, \cite{Nanninga2013}). 

    The following results, related to the irreducibility of the polynomials $E_n$, $S_n$, and $T_n$, are known:
    
    \begin{itemize} 
        \item The polynomials $E_n$ and $E_m$ are coprime when $n$ and $m$ are distinct positive integers (\cite{beukers}). 

        \item For $n \ge 1$, the polynomials $E_n$ are square-free (see \cite{helou}). 
        
        \item If $n \ge 9$ is odd, then $E_n$ is reducible over $\FF_p$ for each prime $p$ (see \cite{helou}). 

        \item If $n\ge 9$ and for some prime number $p$, the polynomial $E_n$ has at most three irreducible factors over $\FF_p$, then $E_n$ is irreducible over $\QQ$ (see \cite{helou}). 

        \item For odd $n \ge 9$, the Galois group of $E_n$ over $\QQ$ is isomorphic to an extension of a subgroup of $S_3^{r_n}$ by a subgroup of $S_{r_n}$, where $r_n \colonequals \frac{\deg{E_n}}{6}$ (see \cite{helou}). 

        \item Filaseta proved that $E_{2p}$ is irreducible over $\QQ$ for all odd primes $p$. His proof is reproduced in \cite{helou}. Historically, this is the first infinite family of provably irreducible polynomials $E_n$. 

        \item For prime $p \ge 17$ such that $p \equiv 2 \pmod{3}$, each irreducible factor of $E_p$ over $\QQ$ is of degree $d \ge 12$ (see \cite{tzermias1}).

        \item For prime $p \ge 23$ such that $p \equiv 2 \pmod{3}$, the polynomial $E_p$ has an irreducible factor over $\QQ$ of degree $d \ge 18$ (see \cite{tzermias1}).

        \item Let $S$ be the set of primes greater than or equal to $19$ and congruent to $1 \pmod{3}$. There exists an effectively computable subset $S_0$ of $S$ with $|S_0| \le 6$ and such that, for any $p \in S \backslash
        S_0$, each irreducible factor of $E_p$ over $\QQ$ is of degree $d \ge 12$ (see \cite{tzermias2}). 

        \item Let $p$ be prime such that $p \equiv 1 \pmod{3}$. Suppose that there exists a prime $q \ge 11$ such that $p \equiv 1 \pmod{q}$ and $p \neq 1 \pmod{q^2}$. Then $E_p$ has an irreducible factor of degree $d \ge 6 \lfloor  \frac{q}{3} \rfloor$ over $\QQ$ (see \cite{tzermias2}). 
        
        We note that there are no infinite families of primes $r$ for which the irreducibility of $E_r$ is currently established. 

        \item In \cite{irick}, Irick proved the irreducibility of $E_{3p}$ over $\QQ$ for primes $p > 3$ and obtained a new proof of the irreducibility of $E_{2p}$ over $\QQ$ for odd primes $p$. 

        \item Suppose $p \ge 5$ is prime and $i \ge 2$. Define the polynomial $\tilde{E}_{3p^i} \in \ZZ[x]$ by 
        \[
            E_{3p^i}(x) = x^{\frac{3p^i-3}{2}} \tilde{E}_{3p^i}(x + x^{-1}).
        \]
        Then, the irreducibility of $\tilde{E}_{3p^i}$ and $E_{3p^i}$ over $\QQ$ are equivalent, and the Newton polygon of $\tilde{E}_{3p^i}(x - 2)$ with respect to $p$ has vertices 
        \begin{multline*}
            \left( \frac{p^0 - 1}{2}, i \right), 
            \left( \frac{p^1 - 1}{2}, i - 1 \right), \ldots,
            \left( \frac{p^i - 1}{2}, 0 \right)
            = \\
            \left( \frac{3p^i - 3}{2} - (p^i - 1), 0 \right), \ldots,  
            \left( \frac{3p^i - 3}{2} - (p^1 - 1), i - 1 \right), 
            \left( \frac{3p^i - 3}{2} - (p^0 - 1), i \right)
        \end{multline*}
        (see \cite{irick}).

        \item If $p \ge 5$ is prime and $i \ge 2$, then $E_{3p^i}$ is a product of at most $i$ irreducible polynomials over $\QQ$ and each of them has degree greater than or equal to $3(p - 1)$ (see \cite{irick}). 
        
        \item If $p \ge 5$ is prime and $i \ge 2$, and $P$ is an irreducible factor of $E_{3p^i}$ over $\QQ$ and $z \in \CC$ is a root of $P$, then 
        \[
             \frac{1}{z}, -z - 1, -1 - \frac{1}{z}, -\frac{1}{z + 1} - \frac{z}{z + 1}
        \]
        are roots of $P$ as well (see \cite{irick}).  

        \item In \cite{Lynch}, Lynch gave new proofs of the irreducibility of $E_{2p}$ for odd primes $p$ and $E_{3p}$ for primes $p > 3$, and proved that the polynomials $E_{5p}$ and $E_{7p}$ have at most $2$ irreducible factors over $\QQ$ for primes $p > 7$.
        
        \item For $n \ge 1$, the polynomials $S_n$ and $T_n$ are square-free (see \cite{nanningaarticle}). 
        
        \item For odd $n \ge 3$, $S_n$ and $T_n$ are irreducible over $\QQ$ (see \cite{nanningaarticle}). 
        \item For $n = 2^qm \ge 4$, where $q = 1, 2, 3, 4, 5$ and $m \ge 1$ is odd, $E_n$ and $S_n$ are irreducible over $\QQ$ (see \cite{nanningaarticle}, \cite{Nanninga2013}).
        \item For $n = 3^qm$, where $q = 1, 2, 3, 4$ and $m \ge 1$ is odd, not divisible by $3$, $E_n$ is irreducible over $\QQ$ (see \cite{Nanninga2013}).
    \end{itemize}

    Note that the formulae \eqref{eq:E}, \eqref{eq:S}, and \eqref{eq:T} imply the following identities for the polynomials $K_{a, n}$ in the case $a = \pm 1$:

    \begin{enumerate}
        \item For $a = -1$ and odd $n$,
        \[
            K_{-1,n}(x) = (1 - x)^n + x^n - 1 = x(x - 1) (x^2 - x + 1)^b E_n(-x),
        \]
        where $b = 0, 1, 2$ if $n \equiv 3, 5, 1 \pmod{6}$ respectively. 

        \item For $a = 1$ and odd $n$,
        \[
            K_{1, n}(x) = (1 - x)^n + x^n + 1 = S_n(-x).
        \]

        \item For even $n$ and $a = \pm 1$,
        \[
            K_{-1,n}(x) = K_{1,n}(x) = x^n + (1 - x)^n + 1 = (x^2 - x + 1)^b T_n(-x),
        \]
        where $b = 0, 1, 2$ according as $n \equiv 0, 2, 4 \pmod{6}$.
    \end{enumerate}
    
    Thus, this case reduces to the irreducibility of the polynomials $E_n$, $S_n$ for odd $n$ and the polynomials $T_n$ for even $n$. As mentioned above, the irreducibility of $S_n$ for odd $n$ is known (see \cite{nanningaarticle}). We will mainly focus on the subcase $T_n$ for even $n$, as evidently, it is less investigated.
    
	However, instead of working with the polynomials $E_n$ for odd $n$ and $T_n$ for even $n$ separately, we will work with the polynomials $\tilde{K}_n$, defined as follows:
    \[
        \tilde{K}_n(x) \colonequals 
        \begin{cases}
            \frac{E_n(-x)}{\cont(K_n)}, & \text{if $n$ is odd,} \\
            \frac{T_n(-x)}{\cont(K_n)}, & \text{if $n$ is even,}
        \end{cases} 
    \]
    where $\cont$ denotes the content (the \(\gcd\) of all the coefficients) of a nonzero polynomial with integer coefficients and $K_n(x) \colonequals x^n + (1 - x)^n + (-1)^n$. A formula for $\cont(K_n)$ is given in Proposition \ref{prop:cont}. 
    
    From the papers of Helou and Nanninga (see \cite{helou}, \cite{nanningaarticle}), it is evident that 
    \begin{equation} \label{eq:sym}
        \tilde K_n(x) = \tilde K_n(1 - x) =  \tilde{K}_n^*(x),
    \end{equation}
    where \(^*\) denotes the reciprocal polynomial, formed by reversing the order of the coefficients.

    A topic related to the study of Cauchy--Mirimanoff and related polynomials is the location of the roots on the complex plane. The following results are known about the location of the roots of the polynomials $E_n$ and $T_n$:

    \begin{itemize}
        \item For an odd prime $n$, exactly one third of the roots of $E_n$ lies on the unit circle, more precisely on the arc going from $-\omega$ to $-\overline{\omega}$ counterclockwise, where $\omega \colonequals e^{\frac{\pi i}{3}}$ (see \cite{Mirimanoff}).

        \item Nanninga reformulated Mirimanoff's result for all odd $n$ and noted that in that case, exactly one third of the roots of $E_n$ lies on the circular arc of the circle of radius $1$ centered at $-1$ going from $-\omega$ to $-\overline{\omega}$ clockwise; exactly one sixth of the roots of $E_n$ lies on the ray $(-\omega, -\omega+i\infty)$; and exactly one sixth of the roots of $E_n$ lies on the ray $(-\overline{\omega}, -\overline\omega-i\infty)$ (see \cite{Nanninga2013}). Furthermore, he described how to derive a similar result for the polynomials $T_n$ when $n$ is even. The result for the polynomials $E_n$ and odd $n$ has also been obtained by Lynch (see \cite{Lynch}). 
    \end{itemize}
    
    For the sake of completeness, in Section \ref{sec:loc}, we will present a complete proof of Nanninga's result for the polynomials $\tilde{K}_n$. We will need the following definition:
    
    \begin{definition}
		We will say that a sequence of polynomials \(\{P_n\}_{n\ge 1}\) \textbf{localizes} on a finite union of regular curves (i.e. curves that can be parametrized by a continuously differentiable function, with non-vanishing derivative) \( \gamma \subseteq \CC \), if the set 
		\[
			R \colonequals \{z \in \CC \mid z \text{ is a root of }P_n \text{ for some } n \in \mathbb{N}\}
		\]
		satisfies
		\(\overline R = \gamma\) (where $\overline{R}$ denotes the topological closure of $R$).
	\end{definition}

    Examples of localizing sequences of polynomials include \(x^n-1\) (on the unit circle) and Chebyshev polynomials of the first and second kind \(T_n(x)\) and \(U_n(x)\) (on the segment \([-1,1]\)).

    \begin{theorem} \label{thm:location} \leavevmode
           Call \(L\) the union of the two rays \((\omega, \omega+i\infty)\cup(\overline{\omega}, \overline\omega-i\infty)\). Call \(A_1\) the circular arc from \(\omega\) to \(\overline{\omega}\) passing through \(0\) (the center of this circle is at \(1\)). Call \(A_2\) the circular arc from \(\omega\) to \(\overline{\omega}\) passing through \(1\) (see Fig. \ref{fig}). Then the polynomials \(\tilde{K}_n\) localize on \(L\cup A_1 \cup A_2\) (the bold union of curves in Fig. \ref{fig}).
    \end{theorem}

    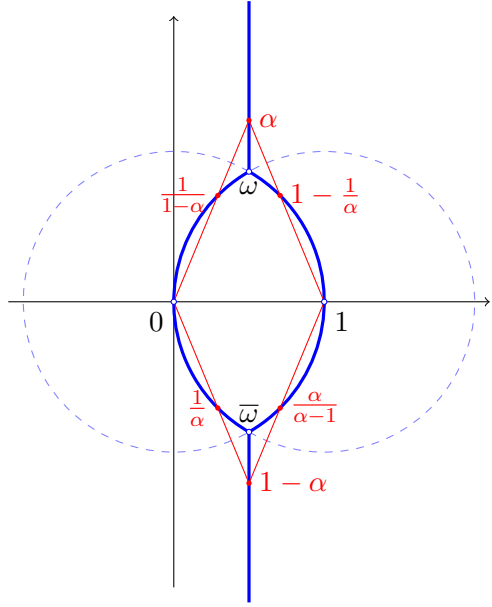
\begin{figure}[h]
        \centering
        \begin{tikzpicture}[scale = 2]
            \draw[->] (-1.1,0) -- (2.1,0);
            \draw[->] (0,-1.9) -- (0,1.9);
            
            \coordinate[label=225:$0$] (0) at (0,0);
            \coordinate[label=360-45:$1$] (1) at (1,0);
            \coordinate[label=below:$\omega$] (om) at (0.5,0.866);
            \coordinate[label=above:$\overline\omega$] (bom) at (0.5,-0.866) {};
            
            \draw[very thick,blue] (bom) arc (-60:60:1);
            \draw[very thick,blue] (om) arc (120:240:1);
            \draw[dashed, blue!50!white] (om) arc (60:300:1);
            \draw[dashed, blue!50!white] (bom) arc (-120:120:1);
            
            \draw[blue, very thick] (om) -- (0.5, 2);
            \draw[blue, very thick] (bom) -- (0.5, -2);
            
            \fill[red] (0.707, 0.707) circle (0.5pt) node[anchor=west] {$1-\frac1\alpha$};
            \fill[red] (0.293, 0.707) circle (0.5pt) node[anchor=east] {$\frac1{1-\alpha}$};
            \fill[red] (0.707, -0.707) circle (0.5pt) node[anchor=west] {$\frac\alpha{\alpha-1}$};
            \fill[red] (0.293, -0.707) circle (0.5pt) node[anchor=east] {$\frac{1}{\alpha}$};
            \fill[red] (0.5, 1.207) circle (0.5pt) node[anchor=west] {$\alpha$};
            \fill[red] (0.5, -1.207) circle (0.5pt) node[anchor=west] {$1-\alpha$};
            \draw[red] (1) -- (0.5, 1.207) -- (0) -- (0.5, -1.207) -- cycle;
            
            \filldraw[white, draw=blue] (0) circle (0.5pt);
            \filldraw[white, draw=blue] (1) circle (0.5pt);
            \filldraw[white, draw=blue] (om) circle (0.5pt);
            \filldraw[white, draw=blue] (bom) circle (0.5pt);
            
        \end{tikzpicture}
        \caption{The geometric representation of the roots of \(\tilde{K}_n\).}
		\label{fig}
	\end{figure}
    
    For even values of $n$, this theorem has also appeared in the work of Roei Raveh related to zeros of theta functions associated with even unimodular lattices (see \cite{Raveh2025}).

    In Section \ref{sec:a<1/2}, we prove a similar theorem for $K_{a, n}$, when $a$ is a fixed rational with $|a| \le \frac{1}{2}$.

    \begin{theorem} \label{easy:loc}
        If \(|a|\le\frac12\), \(K_{a,n}\) localizes on the line \(\Re x = \frac12\).
    \end{theorem}
    
    The following theorem, later used in the proofs of more specific irreducibility results, utilizes Theorem \ref{thm:location} about the location of the roots of $\tilde{K}_n$ to prove that for some \(n\), all the factors of \(\tilde{K}_n\) satisfy the equation \eqref{eq:sym} as well:
    
    \begin{theorem} \label{factorsymmetries_intro} 
	   Let \(n \geq 2\) be even, square-free, or square of a prime. Any irreducible factor \(P\in\ZZ[x]\) of \(\tilde K_n\) satisfies $P(x) = P(1 - x) = P^*(x)$. As a consequence, $6 \mid \deg{P}$. 
	\end{theorem}

    Using Theorem \ref{factorsymmetries_intro} and various variations of Eisenstein's criterion, in Section \ref{sec:6m}, we prove the irreducibility of the polynomials $\tilde{K}_{2m}$ for several infinite families of values of $m$. 
    
    Assuming the irreducibility of $\tilde{K}_{n}$, one may ask the finer question about the order or the structure of the Galois groups of these polynomials over $\QQ$. For odd $n$, it was conjectured by Helou (see \cite{helou}) that the Galois group of $E_n$ over $\QQ$ is isomorphic to the wreath product of $S_3$ by $S_{b_n}$, where $b_n \colonequals \frac{\deg \tilde{K}_n}{6}$. Computer calculations with SageMath suggest that this is also true for $\tilde{K}_n$ for even $n$. Thus, we formulate the following question:

    \begin{question}
        Is the Galois group of $\tilde{K}_{n}$ over $\QQ$ isomorphic to the wreath product of $S_3$ by $S_{b_n}$?
    \end{question}

    In Section \ref{discriminant_section}, we study the discriminant of the polynomials $\tilde{K}_n$ and prove the following theorem:

    \begin{theorem} \label{thm:oddpermut}
        If $n \ge 6$ is even and $n \not \equiv 4 \pmod{12}$, then the Galois group of $\tilde{K}_n$ over $\QQ$ contains an odd permutation. 
    \end{theorem}

    We also mention that a number of papers (see, for example, \cite{JS}, \cite{KT}, \cite{FKP}, \cite{SP}) investigate the irreducibility and Galois groups of related families of polynomials, often referred to as \textit{truncated binomial expansions}. The latter provide useful context, though they do not address our results or questions directly. 

    \section{The case $a \neq \pm 1$} \label{sec:anotpm1}
    
    
	We will denote by \(A(x,y)\) the homogenization of the univariate rational polynomial \(A\) (i.e. \(A(x,y) = y^{\deg A}A\left( \frac{x}{y}\right) \)). The following proposition will be needed for the proof of Theorem \ref{thm:irrevid:part1:sqfree}:
	\begin{proposition}\label{prop:a0} If \(d\neq2\),
    \(\Phi_d(x,1-x)\) is irreducible in \(\mathbb{Q}[x]\) with \(\deg \Phi_d(x,1-x) = \varphi(d)\). Otherwise, \(\Phi_d(x,1-x) = 1\).
	\end{proposition}
	\begin{proof}
        The statement is trivial for $d \le 2$. For $d \ge 3$, this is a special case of a well-known fact about the action of $GL_2(\QQ)$ on the set of polynomials in $\QQ[x]$ with no rational roots given by $A : f \mapsto f^{A}$, where 
        \[
            A =
            \begin{bmatrix}
                a & b\\
                c & d
            \end{bmatrix} \quad\text{and}\quad f^A(x) = (cx + d)^{\deg{f}} f\left(\frac{ax + b}{cx + d}\right)
        \] 
        for all $A \in GL_2(\QQ)$ and $f \in \QQ[x]$.
	\end{proof}

    This proposition can be used to describe the factorization of $K_{0, n}$ for $n \ge 2$ over $\QQ$. 
    
    \begin{corollary}\label{cor:factora0}
        For all $n \ge 2$, the polynomial $K_{0, n}$ factorizes into the following product of irreducible polynomials over $\QQ$:
        \[
            K_{0,n}(x) = \prod_{\substack{d \mid 2n, \ d\nmid n \\ d \neq 2}} \varPhi_d(x, 1 - x).
        \]
    \end{corollary}
    \begin{proof}
        Note that 
        \[
            K_{0, n}(x) = x^n + (1 - x)^n = \frac{x^{2n} - (1 - x)^{2n}}{x^n - (1 - x)^n} = \frac{\prod_{d \mid 2n} \varPhi_d(x, 1 - x)}{\prod_{d \mid n} \varPhi_d(x, 1 - x)} = \prod_{\substack{d \mid 2n \\ d\nmid n}} \varPhi_d(x, 1 - x).
        \]
        It remains to use Proposition \ref{prop:a0} to see that $\varPhi_d(x, 1 - x) = 1$ if $d = 2$ and is irreducible otherwise.
    \end{proof}
    
	\begin{proof}[Proof of Theorem \ref{thm:irrevid:part1:sqfree}]
		Let \[G(x) \colonequals \gcd(K_{a,n}(x), K'_{a,n}(x))=\gcd(x^n+(1-x)^n+a^n, x^{n-1}-(1-x)^{n-1}).\]
		Note that \(x^{n-1}\equiv (1-x)^{n-1} \pmod{G(x)}\), so 
        \[
        x^n+(1-x)^n+a^n \equiv x^n+(1-x)x^{n-1} + a^n \equiv x^{n-1} + a^n \pmod{G(x)}.
        \]
        Hence, \(G(x)\mid x^{n-1}+a^n\). The roots of \(G\) lie on the circle \(|x|=|a|^{\frac{n}{n-1}}\). On the other hand, they lie on the curve \(|x|=|1-x|\) (i.e. the line \(\Re x = \frac{1}{2}\)) as \(G(x)\mid x^{n-1} - (1-x)^{n-1}\). If \(|a|^{\frac{n}{n-1}}<\frac12\), the intersection of the line and the circle is empty, so \(G=1\). We cannot have \(|a|^{\frac{n}{n-1}}=\frac12\) as \(|a|\) is rational and \(\left( \frac12 \right)^{n-1} \) is not the \(n\)th power of a rational. Therefore, we consider the case when the intersection contains two points. Denote those points \(B\) and \(\overline{B}\).
		
		Note that \(G\) has real coefficients. Moreover, \(G(x) \mid x^{n-1}+a^n\) and the latter is square-free. Therefore, there are two possibilities: either \(G=1\) or \(G(x)=(x-B)(x-\overline{B})=x^2-2x\Re B + |B|^2=x^2-x+|a|^{\frac{2n}{n-1}}\).
		
		Now consider the polynomial \(x^{n-1} - (1-x)^{n-1}\). Its canonical factorization is \[
		x^{n-1} - (1-x)^{n-1} = \prod_{d\mid n-1} \Phi_d(x,1-x).
		\]
		Assume \(G\neq 1\), then \(G\) is irreducible in \(\mathbb{R}[x]\). It must be equivalent (obtained by multiplication by a nonzero constant) to some polynomial among \(\Phi_d(x,1-x)\). Since \(\deg G = 2\), \(d\in\{3,4,6\}\), meaning \(x^2-x+|a|^{\frac{2n}{n-1}}\) is equivalent to one of 
		\[
		\Phi_3(x,1-x) = x^2-x+1, \Phi_4(x,1-x) = 2x^2-2x +1, \Phi_6(x,1-x) = 3x^2-3x+1.
		\]
		However, \(|a|^{\frac{2n}{n-1}} \neq 1\) as \(|a|\neq 1\), and \(|a|^{\frac{2n}{n-1}} \in \left\{\frac12,\frac13\right\}\) is impossible as \(|a|\) is rational.
	\end{proof}

    \begin{proof}[Proof of Theorem \ref{thm:irrevid:part2}]
        Suppose $K_{a, n}$ has some root $\beta \in \CC$ that lies on the unit circle. Then $\overline{\beta} = \beta^{-1}$ is a root of $K_{a, n}$ as well. Therefore, we have 
        \begin{align*}
            K_{a, n}(\beta) &= \beta^{n} + (1 - \beta)^n + a^n = 0, \\
            \beta^n K_{a, n}\left( \frac{1}{\beta} \right) &= 1 + (\beta - 1)^n + (a\beta)^n = 0. 
        \end{align*}
        Multiplying the first equation by $(-1)^n$ and subtracting it from the second one, we obtain 
        \[
            1 - (-\beta)^n - (-a)^n + (a\beta)^n = \left( \beta^n - (-1)^n \right) \left( a^n - (-1)^n \right) = 0.
        \]
        Since $a \neq \pm 1$, it follows that $\beta^n = (-1)^n$. In particular, it follows that $\beta$ is a root of unity. Since $a^n = - \beta^n - (1 - \beta)^n$, it follows that $a^n$ is an algebraic integer. Since $a^n$ is also rational, it follows that $a \in \ZZ$. On the other hand, combining $\beta^n = (-1)^n$ with $K_{a, n}(\beta) = 0$, we obtain 
        \[
            (-1)^n + a^n = - (1 - \beta)^n.
        \]
        Therefore, 
        \[
            |a|^n - 1 \le \left|(-1)^n + a^n \right| = |1 - \beta|^n \le 2^n.
        \]
        Since $n \ge 2$, it follows that $|a| \le 2$. Since $a \neq \pm 1$, either $a = 0$ or $a = \pm 2$. If $a = 0$, then $1 - \beta$ must be a root of unity as well. In this case, \(\beta\) and \(1-\beta\) both lie on the unit circle. The only such numbers are $\omega$ and $\overline{\omega}$, and it is not difficult to see that these numbers are not roots of $K_{0, n}$. Therefore, $a = \pm 2$. If $n$ is even, then 
        \[
            2^n + 1 = \left| (-1)^n + a^n \right| = |1 - \beta|^n \le 2^n,
        \]
        which is a contradiction. Therefore, $n$ is odd, $\beta^n = -1$, and 
        \[
            a^n - 1 = (\beta - 1)^n. 
        \]
        Thus,  
        \[
            a^n = (\beta - 1)^n + 1 = \beta \left((\beta - 1)^{n - 1} - (\beta - 1)^{n - 2} + \dotsm - (\beta - 1) + 1 \right),
        \]
        and hence
        \[
            2^n = |a|^n \le |\beta - 1|^{n - 1} + |\beta - 1|^{n - 2} +\dotsm + |\beta - 1| + 1 \le 2^{n - 1} + 2^{n - 2} +  \dotsm + 2  + 1 = 2^n - 1,
        \]
        which is a contradiction. Therefore, $K_{a, n}$ does not have any roots on the unit circle for $a \neq \pm 1$. 
    \end{proof}
    
    \begin{proof}[Proof of Theorem \ref{thm:irrevid:part3:v2}]
        Suppose $a = \frac{r}{s}$, where $r \in \ZZ$, $s \in \mathbb{N}$ and $\gcd(r, s) = 1$. Since $v_2(a) = -1$, it follows that $v_2(s) = 1$ and $2 \nmid r$. Now note that 
        \[
            s^n K_{a, n}\left( \frac{x}{s} \right) = x^n + (s - x)^n + r^n. 
        \]
        Thus, the problem reduces to proving the irreducibility of the polynomial $L_n(x) \colonequals x^n + (s - x)^n + r^n$ over $\ZZ$. It suffices to show that $L_n^*$ is irreducible. The leading coefficient of $L_n^*$ is odd while all other coefficients are even, and the constant term is not divisible by $4$ (it is $2$ if $n$ is even and $sn$ if $n$ is odd). Therefore, by Eisenstein's criterion of irreducibility, $L_n^*$ is irreducible over $\QQ$. 
    \end{proof}

    \begin{proof}[Proof of Theorem \ref{thm:irrevid:part4}]
        We argue as in the proof of Theorem \ref{thm:irrevid:part3:v2}. With the same notation, the polynomial $L_n^*$ satisfies Eisenstein’s criterion at the prime $p$, since its constant term is $sn$ and $v_p(s)=1$ while $p \nmid n$. Hence $L_n^*$ is irreducible over $\mathbb{Q}$.
    \end{proof} 
    
    \section{Localization theorem for $|a| \le \frac{1}{2}$} \label{sec:a<1/2}
    In this section, we investigate the location of the roots of $K_{a, n}$ on the complex plane, for fixed $a$ with $|a| \le \frac{1}{2}$. 

    \begin{theorem}\label{maintrick}
		Fix \(a\in\mathbb{R}\). Then at least \(\left\lfloor\frac n2\right\rfloor-\left\lceil \frac{n}{\pi}\arccos\min\left( 1, \frac{1}{2|a|}\right) \right\rceil\) many roots of \(K_{a,n}\) lie in the upper half-plane on the line \(\Re x = \frac{1}{2}\), with \(|x|\geq \max \left( \frac12,|a|\right)\). Those roots form an everywhere dense set on that curve when \(n\) changes.
	\end{theorem}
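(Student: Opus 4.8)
The plan is to restrict $K_{a,n}$ to the critical line and exploit the symmetry $x \mapsto 1 - x$. On the line $\Re x = \frac12$ we may write $x = \frac12 + it$, so that $1 - x = \overline{x}$ and hence
\[
K_{a,n}(x) = x^n + \overline{x}^{\,n} + a^n = 2\,\Re(x^n) + a^n.
\]
Writing $x = r e^{i\theta}$ with $\theta \in (0, \tfrac{\pi}{2})$ for the upper half-line and using $r\cos\theta = \Re x = \frac12$, i.e. $r = \frac{1}{2\cos\theta}$, the equation $K_{a,n}(x) = 0$ becomes the single real equation
\[
g_n(\theta) \colonequals \frac{\cos(n\theta)}{2^{n-1}\cos^n\theta} + a^n = 0, \qquad \theta \in \left(0, \tfrac{\pi}{2}\right),
\]
where each solution $\theta$ corresponds to exactly one root $x = \frac12 + \frac i2 \tan\theta$ of $K_{a,n}$ in the open upper half-line, the map $\theta \mapsto x$ being a homeomorphism. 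The constraint $|x| \ge \max(\frac12, |a|)$ is equivalent to $r \ge |a|$, i.e. to $\theta \ge \theta_0 \colonequals \arccos\min(1, \frac{1}{2|a|})$, and note $\frac{n}{\pi}\arccos\min(1,\frac{1}{2|a|}) = \frac{n\theta_0}{\pi}$.

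Next I would count sign changes. On the region $\theta \ge \theta_0$ one has $r \ge |a|$, so $2r^n \ge 2|a|^n > |a^n|$; consequently, at each node $\theta_k \colonequals \frac{k\pi}{n}$ the sign of $g_n(\theta_k)$ equals that of $\cos(n\theta_k) = (-1)^k$, since the dominant term $2 r^n (-1)^k$ overwhelms $a^n$. Because consecutive nodes carry opposite signs, the intermediate value theorem produces a root of $g_n$ strictly between $\theta_k$ and $\theta_{k+1}$ for every pair of consecutive nodes lying in $[\theta_0, \tfrac{\pi}{2})$. The nodes in this range are indexed by $k_{\min} = \lceil \tfrac{n\theta_0}{\pi}\rceil \le k \le k_{\max}$, where $k_{\max}$ is the largest integer below $\tfrac n2$, and the interior gaps already give $k_{\max} - k_{\min}$ distinct roots.

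The step I expect to be the main obstacle is squeezing out the exact constant $\lfloor \tfrac n2\rfloor$, which hinges on the behaviour as $\theta \to \tfrac{\pi}{2}^-$ (equivalently $t \to \infty$), where $r \to \infty$ and the sign of $g_n$ is again governed by $\cos(n\theta)$. A parity analysis is needed: for even $n$ the value $\cos(n\theta)$ tends to $(-1)^{n/2} \neq 0$, which is opposite to its value at $\theta_{k_{\max}}$, producing one additional root on the terminal interval $(\theta_{k_{\max}}, \tfrac{\pi}{2})$, whereas for odd $n$ the sign is unchanged and no terminal root is guaranteed. Tracking $k_{\max}$ ($= \tfrac n2 - 1$ for even $n$, $= \tfrac{n-1}{2}$ for odd $n$) together with this terminal root, both parities collapse to the single bound of $\lfloor \tfrac n2\rfloor - \lceil \tfrac{n\theta_0}{\pi}\rceil$ distinct roots, all with $\Re x = \frac12$ and $|x| \ge \max(\frac12, |a|)$.

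Finally, for density I would observe that the roots just constructed have $\theta$-coordinates, one in each interval $(\theta_k, \theta_{k+1})$ of length $\tfrac{\pi}{n}$, and that $k_{\min}\tfrac{\pi}{n} \to \theta_0$ as $n \to \infty$. Hence every $\theta^* \in [\theta_0, \tfrac{\pi}{2})$ lies within $\tfrac{\pi}{n}$ of some root's $\theta$-coordinate for all large $n$, so the union over $n$ of the root $\theta$-coordinates is dense in $[\theta_0, \tfrac{\pi}{2})$. Transporting this through the homeomorphism $\theta \mapsto \frac12 + \frac i2\tan\theta$ shows the roots are everywhere dense on the prescribed ray $\{\Re x = \frac12,\ |x| \ge \max(\frac12, |a|)\}$, as claimed.
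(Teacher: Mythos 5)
Your proposal is correct and follows essentially the same route as the paper: parametrize the line $\Re x = \tfrac12$ by $\theta$, observe that the sign of the restricted function at the nodes $\theta_k = \tfrac{k\pi}{n}$ is $(-1)^k$ because the oscillating term dominates $a^n$ once $r \ge |a|$, and count sign changes via the intermediate value theorem, then transport density through the homeomorphism $\theta \mapsto \tfrac12 + \tfrac{i}{2}\tan\theta$. The only (cosmetic) difference is that the paper clears the denominator and works with $f_n(\theta) = 2\cos n\theta + (2a\cos\theta)^n$ on the closed interval, which absorbs your separate limiting analysis of the terminal interval near $\theta = \tfrac{\pi}{2}$ into the same node count.
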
 
	\begin{proof}
		Denote by \(A\) the point on \(\Re x = \frac12\) in the upper half-plane with modulus \(|A|=\max \left( \frac12,|a|\right)\). Consider the variable written in the form $x=\frac{1}{2} + \frac{1}{2}i\tan\theta$, where \[
            \theta\in D\colonequals \left[ \arccos\min\left( 1,\frac1{2|a|}\right) ,\frac\pi2\right).
        \]
        Note that for the lowest value of \(\theta\), 
		
		\begin{align*}
			|x|^2& = x\overline{x} = \left(\frac{1}{2} + \frac{1}{2}i\tan\theta\right)\left(\frac{1}{2} - \frac{1}{2}i\tan\theta\right) \\
            &=\frac1{4\cos^2\theta} = \frac1{4\min\left(1,\dfrac{1}{2|a|}\right)^2 } \\
            &= \max\left(\frac{1}{2}, |a|\right)^2,
		\end{align*}
		so the map \(\theta \mapsto \frac{1}{2} + \frac{1}{2}i\tan\theta \) indeed maps \(D\) to the ray \([A,A + i\infty)\). Then,
		\begin{align*}
			K_{a,n}(x) &= \left( \frac{1}{2} + \frac{1}{2}i\tan\theta\right) ^n + \left(\frac{1}{2} - \frac{1}{2}i\tan\theta\right) ^n + a^n \\
            &=\frac{(\cos\theta+i\sin\theta)^n}{(2\cos\theta)^n} +\frac{(\cos\theta-i\sin\theta)^n}{(2\cos\theta)^n}+a^n \\
            &= \frac{2\cos n\theta}{(2\cos\theta)^n}+a^n \\
            &=\frac{2\cos n\theta + (2a\cos\theta)^n}{(2\cos\theta)^n}.
		\end{align*}
		
		Thus, it is sufficient to prove that \(f_n(\theta)\colonequals2\cos n\theta + (2a\cos\theta)^n\) has at least \(\left\lfloor\frac n2\right\rfloor-\left\lceil \frac{n}{\pi}\arccos\min\left( 1, \frac{1}{2|a|}\right) \right\rceil\) many zeros on \(D\). Consider \(f_n\) defined on \(\overline{D}\) (the topological closure). Observe that since either \(|a|\le \frac12\) or \(\theta \geq \arccos \frac1{2|a|}\), $|2a\cos \theta|\le 1$. Hence, \(f_n(\theta)\) has the same sign as $\cos n\theta$ when $\cos n\theta=\pm 1$, equivalently \(\theta = \frac{k\pi}{n},~k\in\ZZ\). The values of \(k\) for which \(\theta\in \overline{D}\) are the integers in \(\left[\frac{n}{\pi}\arccos\min\left( 1, \frac{1}{2|a|}\right), \frac{n}{2}\right]\). There are \(\left\lfloor\frac n2\right\rfloor-\left\lceil \frac{n}{\pi}\arccos\min\left( 1, \frac{1}{2|a|}\right) \right\rceil+1\) possible values for such \(k\). Since \(\cos n\theta\) has different signs for successive points \(\theta = \frac{k\pi}{n}\) and \(\theta = \frac{(k+1)\pi}{n}\), \(f_n\) has different signs as well. As \(f_n\) is continuous and real-valued, there are zeros between these successive values of \(\theta\) (these zeros are all in \(D\) as \(k=\frac{n}2\) does not yield a zero). Therefore, there are at least \(\left\lfloor\frac n2\right\rfloor-\left\lceil \frac{n}{\pi}\arccos\min\left( 1, \frac{1}{2|a|}\right) \right\rceil\) zeros of \(f_n\) (and hence of \(K_{a,n}\)). 
		
		For any interval \(\left[ \frac{k\pi}{n}, \frac{(k+1)\pi}{n}\right)\subseteq D \), there exists a root of \(f_n\) in that interval, so all the roots form an everywhere dense set in \(D\). This everywhere dense set is mapped to an everywhere dense set on the ray \([A, A+i\infty)\) by the homeomorphism \(\theta \mapsto \frac{1}{2} + \frac{1}{2}i\tan\theta \).\qedhere
	\end{proof}
    
	\begin{proof}[Proof of Theorem \ref{easy:loc}]
		Fix \(a\) and \(n\). According to Theorem \ref{maintrick}, there are at least \(\left\lfloor\frac n2\right\rfloor \) roots in the upper half-plane. Since \(K_{a,n}\) has real coefficients, the conjugates of its roots are also roots. Thus, we obtain at least \(2\left\lfloor\frac n2\right\rfloor\) roots on the line \(\Re x = \frac12\) (the conjugates are distinct from the originals as \(x=\frac12\) is not a root: it corresponds to \(\theta = 0\), and \(f_n(\theta) = 2 + (2a)^n \neq 0\)). But \(2\left\lfloor\frac n2\right\rfloor=\deg K_{a,n}\), so there are no other roots.
		
		When changing \(n\), the set of roots is everywhere dense above \(\frac12\). New roots obtained by conjugation form an everywhere dense set below \(\frac12\). Hence, the set of roots is everywhere dense on the whole line.
	\end{proof}

    \begin{corollary}
        For all $n \ge 2$ and $a \in \QQ$, such that $|a| \le \frac{1}{2}$, any factor $P$ of $K_{a, n}$ over $\QQ$ satisfies $P(1 - x) = P(x)$ and $2 \mid \deg P$. 
    \end{corollary}
    \begin{proof}
        Note that it suffices to prove this assertion only for monic irreducible factors of $K_{a, n}$ over $\QQ$. Suppose $P$ is a monic irreducible factor of $K_{a, n}$ over $\QQ$. From Theorem \ref{easy:loc}, any root $\alpha$ of $P$ is non-real and satisfies $\overline{\alpha} = 1 - \alpha$. Thus, $\deg{P}$ is even and the minimal polynomial of $\overline{\alpha} = 1 - \alpha$ over $\QQ$ is $P(1 - x)$. Since $P(\overline{\alpha}) = 0$ and both $P(x)$ and $P(1 - x)$ are irreducible, it follows that $P(x) = P(1 - x)$.
    \end{proof}

    \section{Preliminary results for $a = \pm 1$} \label{sec:loc}
	Now we move on to investigating the case \(a = \pm 1\). Recall that $K_n \colonequals K_{-1, n}$. The following proposition is a mild generalization of Nanninga's results (see \cite{Nanninga2013}) and is included here for the sake of completeness.
    
    \begin{proposition} \label{prop:cont}
        We have
        \[
            \cont (K_n) = 
            \begin{cases}
                1, & \text{if $n$ is not a power of a prime}, \\
                p, & \text{if $n = p^e$, where $p$ is a prime and $e \ge 1$.}
            \end{cases}
        \]
    \end{proposition}
    \begin{proof}
        We will first find all primes \(p\) with \(p\mid \cont (K_n)\). Let \(p\) be such a prime and \(n=p^em\), where \(\gcd(p,m)=1\) and \(e\) is a nonnegative integer. We have \(K_n = 0\) in \(\FF_p\). However,
        \[
        x^n+(1-x)^n+(-1)^n=(x^m+(1-x)^m+(-1)^m)^{p^e}
        \]
        in \(\FF_p\), so \(K_n = 0 \) is equivalent to \(x^m+(1-x)^m+(-1)^m = 0\). If \(m=1\), this is indeed true. Otherwise, the coefficient of the linear term of this latter polynomial equals \(-m\), implying \(p\mid m\), a contradiction. Hence, \(n = p^e\).

        Thus, if \(n\) is not a power of a prime, \(\cont (K_n) = 1\). Otherwise, if \(n=p^e\), the only prime dividing \(\cont (K_n)\) is \(p\). To prove that \(\cont (K_n)=p\), we will find a coefficient with \(p\)-adic valuation \(1\). Consider the coefficient of \(x^{p^{e-1}}\) in \(K_n\). Since \(0<p^{e-1}<p^e\), it equals \((-1)^{p^{e-1}}\binom{p^e}{p^{e-1}}\). According to Legendre's formula,
        \begin{align*}
           v_p\left(\binom{p^e}{p^{e-1}}\right) &= v_p((p^e)!) - v_p((p^e-p^{e-1})!) - v_p((p^{e-1})!)\\&= \sum_{i=0}^e \left(\left\lfloor \frac{p^e}{p^i}\right\rfloor - \left\lfloor \frac{p^e-p^{e-1}}{p^i}\right\rfloor - \left\lfloor \frac{p^{e-1}}{p^i}\right\rfloor\right)\\&= \underbrace{0+0+\dotsm+0}_{i\leq e-1}+1 = 1. \qedhere
        \end{align*}
    \end{proof}

    Denote \(d_n \colonequals \deg \tilde{K}_n\). From the definition of the polynomials $E_n$, $T_n$ (see \cite{nanningaarticle}) the following formula holds:
    \begin{equation*}
	d_n = \begin{cases}
		n-7, &\text{if } n\equiv 1\pmod 6,\\
		6\left\lfloor\frac{n}{6}\right\rfloor, &\text{otherwise}.
	\end{cases}
	\end{equation*}

    \begin{remark} \label{constant_values}
        \(\tilde K_n\) is constant if and only if \(n = 2,3,4,5,7\).
    \end{remark}

    Here are the \(\tilde K_n\) for \(2\le n\le 15\):
    \begin{align*}
        \tilde{K}_{2}(x) &= \tilde{K}_3(x) = \tilde{K}_4(x) = \tilde{K}_5(x) = \tilde{K}_7(x) = 1\\
        \tilde{K}_{6}(x)&=2x^6 - 6x^5 + 15x^4 - 20x^3 + 15x^2 - 6x + 2\\
        \tilde{K}_{8}(x)&=x^6 - 3x^5 + 10x^4 - 15x^3 + 10x^2 - 3x + 1\\
        \tilde{K}_{9}(x)&=3x^6 - 9x^5 + 19x^4 - 23x^3 + 19x^2 - 9x + 3\\
        \tilde{K}_{10}(x)&=2x^6 - 6x^5 + 27x^4 - 44x^3 + 27x^2 - 6x + 2\\
        \tilde{K}_{11}(x)&=x^6 - 3x^5 + 7x^4 - 9x^3 + 7x^2 - 3x + 1\\
        \tilde{K}_{12}(x)&=2x^{12} - 12x^{11} + 66x^{10} - 220x^9 + 495x^8 - 792x^7 + 924x^6 \\ &- 792x^5+ 495x^4 - 220x^3 + 66x^2 - 12x + 2\\
        \tilde{K}_{13}(x)&=x^6 - 3x^5 + 8x^4 - 11x^3 + 8x^2 - 3x + 1\\
        \tilde{K}_{14}(x)&=2x^{12} - 12x^{11} + 77x^{10} - 275x^9 + 649x^8 - 1078x^7 + 1276x^6\\ & - 1078x^5 + 649x^4 - 275x^3 + 77x^2 - 12x + 2\\
        \tilde{K}_{15}(x)&=15x^{12} - 90x^{11} + 365x^{10} - 1000x^9 + 2003x^8 - 3002x^7 + 3433x^6\\ & - 3002x^5 + 2003x^4 - 1000x^3 + 365x^2 - 90x + 15
    \end{align*}

    In \cite{nanningaarticle}, Nanninga proved that the polynomials $E_n$ and $T_n$ are square-free, and hence the polynomials $\tilde{K}_n$ are square-free as well. The polynomials $E_n$ and $T_n$ are defined so that $-\omega$, $-\overline{\omega}$, $-1$, and $0$ are not roots of them. Thus, neither of the polynomials $\tilde{K}_n$ has a root at $\omega$, $\overline{\omega}$, $1$, or $0$.

	\begin{proof}[Proof of Theorem \ref{thm:location}]
		Theorem \ref{maintrick} implies (after taking conjugates of the roots) that there are at least \(2\left( \left\lfloor\frac{n}{2}\right\rfloor - \left\lceil\frac{n}{3}\right\rceil\right) \) many roots of $\tilde{K}_n$ on \(L\). Straightforward checking shows \( \left\lfloor\frac{n}{2}\right\rfloor - \left\lceil\frac{n}{3}\right\rceil = \frac{d_n}{6}\). Now observe that 
		\begin{equation}\label{eq:symkn}
		    K_n(x)=K_n(1-x)=(-x)^{n}K_n\left( \frac1{x}\right). 
		\end{equation}
		This implies that the roots of \(\tilde{K}_n\) are mapped to other roots under the maps $x\mapsto 1-x$ and $x\mapsto \frac1x$. The map \(x\mapsto \frac1x\) is a geometric inversion with center \(0\) and radius \(1\), followed by a reflection across the real axis. The inversion of the line \(\Re x=\frac12\) is a circle passing through zero. It should also pass through \(\omega\) and \(\overline\omega\) since those remain fixed (see \cite[pp. 77-83]{Cox} for the basic theory of geometric inversion). Thus, \(x\mapsto \frac1x\) maps the \(\frac{d_n}{3}\) roots on \(L\) to \(A_1\). Then the map \(x\mapsto 1-x\) (a central symmetry across \(\frac{1}{2}\)) maps those roots to \(A_2\). In total, we have found \(d_n\) many roots on the curves, so there can be no more.
		
		Since the maps $x\mapsto \frac1x$ and $x\mapsto 1-x$ are homeomorphisms on \(L\) and \(A_1\) respectively, they map the set of everywhere dense roots on \(L\) to a set of everywhere dense roots on \(A_2\) and \(A_1\) when \(n\) changes. \qedhere
	\end{proof}

    \begin{proposition}\label{prop:cyc}
        \(\tilde K_n\) is coprime to all cyclotomic polynomials. Equivalently, all the roots on the right arc have an irrational argument (with respect to \(2\pi\)).
    \end{proposition}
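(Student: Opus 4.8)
The plan is to prove the equivalent statement that no root of unity is a root of $\tilde K_n$; since the roots of the cyclotomic polynomials are exactly the roots of unity, this is the same as coprimality with every $\Phi_m$. First I would pin down which roots of $\tilde K_n$ could even be roots of unity. By Theorem \ref{thm:location} every root lies on $L\cup A_1\cup A_2$. Points of $L$ have $\Re x=\tfrac12$ and $|\Im x|>\tfrac{\sqrt3}{2}$, hence $|x|>1$; and the circle $|x-1|=1$ carrying $A_1$ meets the unit circle only at $\omega,\bar\omega$, the excluded endpoints of that arc. So the only roots of $\tilde K_n$ on the unit circle lie on the right arc $A_2$, which is precisely $\{\,z:|z|=1,\ |z-1|<1\,\}$. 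This already yields the stated equivalence, since any common root with a cyclotomic polynomial is a root of unity and therefore must sit on $A_2$.

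Next, suppose for contradiction that some primitive $m$-th root of unity $\zeta$ is a root of $\tilde K_n$. Because $1$ is never a root and the primitive sixth roots $\omega,\bar\omega$ were removed in the definition of $\tilde K_n$, we have $m\ne 1$ and $m\ne 6$. Since $\zeta$ is a root of $K_n\in\ZZ[x]$, its minimal polynomial $\Phi_m$ divides $K_n$, so every primitive $m$-th root $\zeta^t$ (with $\gcd(t,m)=1$) is again a root of $K_n$. For $m\notin\{1,6\}$ none of these coincides with a trivial root $0,1,\omega,\bar\omega$, so each $\zeta^t$ is actually a root of $\tilde K_n$ and hence lies on $A_2$; consequently $|1-\zeta^t|<1$ for every such $t$.

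The decisive step is to multiply these inequalities. Writing $\zeta=e^{2\pi i/m}$, the product of $(1-\zeta^t)$ over all units $t$ modulo $m$ equals $\Phi_m(1)$, so
\[
|\Phi_m(1)|=\prod_{\gcd(t,m)=1}|1-\zeta^t|<1.
\]
But $\Phi_m(1)$ is an integer, and it is nonzero because $1$ is not a primitive $m$-th root of unity for $m\ge 2$; hence $|\Phi_m(1)|\ge 1$, a contradiction. Therefore $\tilde K_n$ has no root of unity among its roots and is coprime to every cyclotomic polynomial (the case $\tilde K_n$ constant being trivial). I expect the main subtlety to be exactly this last step: a naive attempt to force a contradiction by exhibiting one Galois conjugate $\zeta^t$ whose argument escapes the $120^\circ$ arc runs into delicate questions about the distribution of units modulo $m$ (essentially the Jacobsthal function), whereas passing to the full product $\Phi_m(1)=\prod(1-\zeta^t)$ avoids these entirely and converts the geometric observation "$A_2$ is the locus $|1-z|<1$ on the unit circle" into a clean integrality contradiction.
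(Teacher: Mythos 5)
Your proof is correct, and the decisive step is genuinely different from the paper's. Both arguments begin the same way: Theorem \ref{thm:location} forces any cyclotomic divisor of $\tilde K_n$ to have all its roots on the right arc $A_2$, and the small cases $m=1,6$ are ruled out because $x-1$ and $x^2-x+1$ are divided out (with exact multiplicity) in Definition \ref{ktilde}. From there the paper argues by exhibiting, in each of three congruence classes of $d$ modulo $4$, an explicit primitive $d$-th root of unity whose argument exceeds $\pi/3$ and which therefore escapes $A_2$; this costs a short case analysis and a coprimality check for the chosen exponent, but needs nothing beyond elementary geometry. You instead translate ``$A_2$ is the locus $|z|=1$, $|1-z|<1$'' into the product
\[
|\Phi_m(1)|=\prod_{\gcd(t,m)=1}|1-\zeta^t|<1,
\]
and contradict the fact that $\Phi_m(1)$ is a nonzero integer for $m\ge 2$. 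This is cleaner and uniform in $m$, and it is in fact the same integrality trick the authors deploy later in the proof of Theorem \ref{factorsymmetry} (where $|P(1)|=|P(0)|\prod_{\rho}|1-\rho|<1$ kills a hypothetical factor supported on $A_2$), so your route fits naturally into the paper's toolkit. One small caveat: your closing remark that the ``exhibit one escaping conjugate'' strategy runs into Jacobsthal-function difficulties overstates the obstacle --- the paper's three-case construction shows a single well-chosen unit suffices --- but that aside does not affect the validity of your argument.
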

    \begin{proof}
        Since this is equivalent to showing that the polynomials $\tilde{K}_n$ do not vanish on the set of the roots of unity, which is closed under taking additive inverses, it suffices to check that \(\gcd(\tilde{K}_n(x), \varPhi_d(-x)) = 1\) for all positive integers \(n, d\). Denote \(\zeta_d=e^{\frac{2\pi i}{d}}\). Since $\tilde{K}_n(x) = 1$ for $n \le 5$, we may assume that $n \ge 6$. Since $-1$ is not a root of $K_n$ and $1, \omega, \overline{\omega}$ are not roots of $\tilde{K}_n$, we may assume that $d \ge 4$. Then 
        \[
            |1 + \zeta_d|^n = \left| 2\cos{\left(\frac{\pi}{d}\right)} \right|^n > 2 \geq |\zeta_d^n + (-1)^n|,
        \]
        implying $K_n(-\zeta_d) \neq 0$. This means \(\tilde{K}_n(-\zeta_d)\neq 0\), so \(\gcd(\tilde{K}_n(x), \varPhi_d(-x)) = 1\).
    \end{proof}

    \begin{definition}
        Denote by \(H\) the group of transformations \[
        \left\{x\mapsto x, x \mapsto 1 - x, x \mapsto \frac{1}{x}, x \mapsto \frac{x}{x - 1}, x \mapsto \frac{1}{1 - x}, x \mapsto \frac{x - 1}{x}\right\}.
        \]
    \end{definition}
    As linear rational functions, they can be represented as matrices in \(PGL_2(\mathbb{C})\). It is not difficult to verify by matrix multiplication that \(H\) is a group of order \(6\) which is not abelian, so it is isomorphic to the symmetric group \(S_3\). Note that $H$ acts on $\CC[x]$, by the transformation \( \left( x\mapsto\frac{ax+b}{cx+d} \right) \in H\), sending a polynomial $f(x) \in \CC[x]$ to $(cx + d)^{\deg{f}} f \left( \frac{ax + b}{cx + d}\right)$. By a combination of remarks of Helou and Nanninga (see \cite{helou}, \cite{nanningaarticle}), $\tilde{K}_n$ is fixed by all of the elements of $H$.

    From a result of Helou (see \cite{helou}), it follows that all the factors of $\tilde{K}_n$ over $\QQ$ are invariant under the action of $H$ for odd primes $n$. Now we prove Theorem \ref{factorsymmetries_intro}, which gives the same result for the case of $n$ being even, square-free, or square of a prime.
    
	\begin{proof}[Proof of Theorem \ref{factorsymmetries_intro}]
		It suffices to prove that for every irreducible factor $P$ of $\tilde{K}_n$ and every root $r$ of $P$, the symmetric copies of $r$ (images under the transformations of $H$) are also roots of $P$.
        
        Note that for such \(n\), $\tilde{K}_n(0)$ is square-free. It is sufficient to prove the theorem for the case when \(P\) contains a root on the arc $A_2$. The main statement will then follow by the following reasoning: any irreducible factor \(Q\) of \(\tilde K_n\) has a root \(r'\) which has a symmetric copy \(r''\) on $A_2$. The minimal polynomial \(R\) of \(r''\) has all the symmetric copies of \(r''\), including \(r'\) as roots. Since \(\tilde K_n\) is square-free, \(Q\) has to be \(R\), so it satisfies the required property.
		
		Now let \(P\) be an irreducible factor of \(\tilde K_n\) which has a root \(r\) on $A_2$. \(P\) automatically has content \(1\) as \(\cont (\tilde K_n) = 1\). Since \(P\) has real coefficients, \(\overline r = \frac1r\) is also a root of \(P\). Polynomials \(P\) and \(P^*\) have a common root, so they are not relatively prime. Since \(P\) is irreducible, we get \(P\mid P^*\). But \(P\) and \(P^*\) have the same degree, so \(P^* = cP\) for some constant \(c\). Note that \(P^*(1) = 1^{\deg P}P(\frac11) = P(1)\), and hence \(c=1\).
		
		Next, observe that if \(P\) has a root on $L$, similar reasoning yields \(P(x)=P(1-x)\) (to prove that \(c=1\), we will need to plug in \(x=\frac12\)). In that case, \(P\) has two symmetries generating \(S_3\), so it satisfies the required condition. Similar reasoning applies if \(P\) has a root on the left arc (this time with the symmetry \(x\mapsto \frac{x}{x-1}\)). Thus, we may assume that \(P\) has roots only on $A_2$. We now proceed to showing that this case is impossible. 
        
        Denote \(Q(x)=P(1-x)\). Note that \(P, Q, Q^*\) are all irreducible primitive polynomials with disjoint set of roots (they lie on $A_2$, $A_1$, $L$ respectively), so 
        \[
		    P(x)Q(x)Q^*(x)\mid \tilde K_n(x).
		\]
        
        Plug in \(x=0\) to get \(P(0)P(1)l \mid \tilde K_n(0)\), where \(l\) is the leading coefficient of \(P(1-x)\). Since \(P\) has no real roots, its degree is even, so \(l\) is also the leading coefficient of \(P(x)\). Considering the fact that \(P=P^*\), we have \(l=P(0)\), implying \(P(0)^2P(1)\mid \tilde K_n(0)\). Since \(\tilde K_n(0)\) is square-free, we get \(P(0)=\pm 1\). Therefore, $P$ is a monic irreducible polynomial with integer coefficients with all roots lying on the unit circle. By Kronecker's theorem (see \cite{cyclo}), $P$ is a cyclotomic polynomial. On the other hand, from Proposition \ref{prop:cyc}, $\tilde{K}_n$ is coprime to all cyclotomic polynomials, which is a contradiction. 

        Hence, \(P\) also satisfies \(P(x) = P(1-x) = P^*(x)\).
	\end{proof}

    The following theorem investigates the reducibility of the polynomials \(f\) satisfying $f(x)=f^*(x) = f(1 - x)$ modulo primes. A similar theorem was proved by Helou (see \cite{helou}), but our proof is significantly different. 

    \begin{theorem} \label{prime_reductions}
        Let $f \in \ZZ[x]$ be a non-constant polynomial satisfying $f(1 - x) = f(x) = f^*(x)$ and let $p$ be a prime number. Denote by $\overline{f}$ the reduction of $f$ modulo $p$. Then one of the following is true:
        \begin{itemize}
            \item $\overline{f} = 0$. 
            \item $f(x) = c(x^2 - x + 1)$ for some $c$ with $p \nmid c$ and $p \equiv 2 \pmod{3}$.  
            \item $\overline{f}$ is reducible in $\FF_p[x]$.
        \end{itemize} 
    \end{theorem}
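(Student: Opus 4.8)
The plan is to verify the trichotomy by handling the only case not already listed: I assume $\overline f\neq 0$ (ruling out the first bullet) and $\overline f$ irreducible over $\FF_p$ (ruling out the third), and I aim to derive the second bullet, that $f=c(x^2-x+1)$ with $p\nmid c$ and $p\equiv 2\pmod 3$. The first task is careful bookkeeping about the reduction. Writing $n=\deg f$ and $a_0,\dots,a_n$ for the coefficients, the identities $f(1-x)=f(x)$ and $f=f^*$ live in $\ZZ[x]$, so they reduce to $\overline f(1-x)=\overline f(x)$ and $\overline a_i=\overline a_{n-i}$. If $p$ divided $a_n$, then by palindromy it would divide $a_0$, forcing $\overline f(0)=0$ and $x\mid\overline f$; irreducibility would then give $\overline f=\overline c\,x$, incompatible with $\overline f(1-x)=\overline f(x)$. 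Hence $p\nmid a_n$, so $\deg\overline f=n$ and $\overline f$ is genuinely self-reciprocal with nonzero constant term. In particular $0$ is not a root, so the two generating maps $x\mapsto 1-x$ and $x\mapsto 1/x$ both send roots of $\overline f$ to roots, and therefore the whole group $H$ permutes the root set $R\subset\overline{\FF_p}$.

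The crux is the interaction of $H$ with the Frobenius automorphism $\sigma\colon\alpha\mapsto\alpha^p$. Since $\FF_p$ is perfect, the irreducible $\overline f$ is separable, so $R$ is a single $\sigma$-orbit of size $m\colonequals\deg\overline f$; here $m\ge 2$, because a linear $\overline f=\overline a x+\overline b$ cannot satisfy $\overline f(1-x)=\overline f(x)$. Every transformation in $H$ has coefficients in $\FF_p$, and $\sigma$ is a field homomorphism fixing $\FF_p$, so each $h\in H$ commutes with $\sigma$ as a permutation of $R$. Fixing a root $\alpha$, the element $h\alpha$ is again a root, hence equals $\sigma^{j(h)}\alpha$ for some residue $j(h)\bmod m$; commutativity then gives $h\sigma^i\alpha=\sigma^i h\alpha=\sigma^{i+j(h)}\alpha$, so $h$ acts on all of $R$ as the translation $\sigma^{j(h)}$ and $h\mapsto j(h)$ is a homomorphism $H\to\ZZ/m$. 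Its image is abelian, so the map factors through $H^{\mathrm{ab}}\cong\ZZ/2$; consequently the kernel contains the cyclic order-$3$ subgroup $\{x,\ \tfrac{x-1}{x},\ \tfrac1{1-x}\}$. Thus every root is fixed by $x\mapsto\tfrac1{1-x}$, i.e.\ satisfies $\tfrac1{1-\alpha}=\alpha$, equivalently $\alpha^2-\alpha+1=0$.

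This forces every root of $\overline f$ to be a root of $x^2-x+1$. As $\overline f$ is irreducible of degree $\ge 2$ with all roots among the at most two roots of $x^2-x+1$, it has degree exactly $2$ and $\overline f=\overline c\,(x^2-x+1)$; since the roots of $x^2-x+1$ are primitive sixth roots of unity, this is irreducible over $\FF_p$ exactly when $p\equiv 2\pmod 3$. Finally I lift to $f$ itself: since $p\nmid a_n$ we have $\deg f=2$, and a one-line computation shows the only degree-$2$ integer polynomials satisfying $f(1-x)=f(x)=f^*(x)$ are $a(x^2-x+1)$, so $f=c(x^2-x+1)$ with $p\nmid c$. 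The step I expect to be the main obstacle is the key idea itself—noticing that $H$ commutes with Frobenius and that this forces the $S_3$-action on the cyclic Frobenius orbit to factor through $\ZZ/2$, pinning all roots to $x^2-x+1$; the most error-prone technical point is the degree bookkeeping in the reduction, since $f=f^*$ permits the degree to drop modulo $p$ and this must be excluded before the root-permutation picture is valid.
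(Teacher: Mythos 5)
Your proof is correct, and its skeleton coincides with the paper's: assume $\overline f\neq 0$ and $\overline f$ irreducible, rule out a degree drop using $f=f^*$ (so that $0$ and $1$ are not roots and $H$ genuinely permutes the root set), exploit the fact that the roots form a single Frobenius orbit to force $\alpha^2-\alpha+1=0$ for every root $\alpha$, and finish with the degree count and the observation that $x^2-x+1$ is irreducible mod $p$ exactly when $p\equiv 2\pmod 3$. Where you genuinely diverge is the central step. The paper picks two specific witnesses, an exponent $j$ with $\alpha^{p^j}=1-\alpha$ and a Galois element $\sigma$ with $\sigma(\alpha)=\alpha^{-1}$, and combines them in a two-line computation to reach $(1-\alpha)^{-1}=1-\alpha^{-1}$. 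You instead note that every $h\in H$ commutes with the Frobenius, hence acts on the orbit $R$ as a power of the $m$-cycle $\sigma|_R$; the induced map $H\to\ZZ/m$ has abelian image, so it kills the commutator subgroup $\left\{x,\tfrac{1}{1-x},\tfrac{x-1}{x}\right\}\cong A_3$, and triviality of the $3$-cycle on $R$ is precisely $\alpha^2-\alpha+1=0$. Both arguments extract the same clash between the nonabelian $S_3$-symmetry and the abelian Frobenius orbit; yours is element-free and makes the mechanism transparent (and would generalize to any nonabelian symmetry group of M\"obius transformations over $\FF_p$), while the paper's needs only transitivity of the Galois action and a direct manipulation. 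Your peripheral bookkeeping --- excluding $\overline f=\overline c\,x$, excluding linear $\overline f$, lifting back to $f=c(x^2-x+1)$ with $p\nmid c$ --- matches the paper's and is complete.
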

    \begin{proof}
        Suppose $\overline{f}$ is irreducible in $\FF_p[x]$. Note that $\overline{f}(1 - x) = \overline{f}(x)$. If the leading coefficient of \(f\) is divisible by $p$, then the constant term of $f$ is divisible by $p$ as well. Then $0$ is a root of $\overline{f}$ in $\FF_p$, and hence either $\overline{f} = 0$ or $\overline{f}(x) = cx$ for some $c \in \FF_p^\times$. In the latter case, \(\overline{f}(1 - x) = \overline{f}(x)\) implies $c(1 - x) = cx$, and hence $c = 0$. Therefore, we may assume that the leading coefficient of $f$ is not divisible by $p$, which implies \(\deg\overline{f} = \deg{f}\), the constant term of \(\overline{f}\) is different from \(0\), and $\overline{f}^*(x) = \overline{f^*}(x) = \overline{f}(1 - x) = \overline{f}(x)$.
        
        Let $L$ be the splitting field of $\overline{f}$ over $\FF_p$, and let $G$ denote the Galois group $\Gal(L/\FF_p)$. Fix a root $\alpha \in L$ of $\overline{f}$. Since $0$ is not a root of $\overline{f}$, $\alpha \neq 0$. Note that by $\overline{f}(1 - x) = \overline{f}(x) = \overline{f}^*(x)$, it follows that $1 - \alpha$ and $\alpha^{-1}$ are roots of $\overline{f}$ as well. Since $\overline{f} \in \FF_p[x]$ and $\overline{f}$ is irreducible, $\{\alpha, \alpha^p, \alpha^{p^2}, \ldots, \alpha^{p^{\deg \overline{f} - 1}}\}$ is the set of roots of $\overline{f}$ (see \cite{DF}). Thus, for some index $j$, 
        \[
            \alpha^{p^j} = 1 - \alpha.
        \]
        On the other hand, since $\overline{f}$ is irreducible in $\FF_p[x]$, $G$ acts transitively on the set of roots of $\overline{f}$ (see \cite{DF}). Since $\alpha^{-1}$ is a root of $\overline{f}$, there is an automorphism $\sigma \in G$ such that $\sigma(\alpha) = \alpha^{-1}$. Applying $\sigma$ to $\alpha^{p^j} = 1 - \alpha$, we see that 
        \[
            \alpha^{-p^j} = 1 - \alpha^{-1}. 
        \]
        Thus, $(1 - \alpha)^{-1} = 1 - \alpha^{-1}$, implying $\alpha^2 - \alpha + 1 = 0$. Thus, $\overline{f}(x)$ and $x^2 - x + 1$ have a common factor over $L$. Hence, they must have a common factor over $\FF_p$. Since $\overline{f}$ is irreducible in $\FF_p[x]$, it follows that $\overline{f}(x) \mid x^2 - x + 1$ in $\FF_p[x]$. Thus, $\deg{f} = \deg{\overline{f}} \in \{0, 1, 2\}$. Since $f$ is non-constant, either $\deg{f} = 1$ or $\deg{f} = 2$.
        \begin{itemize}
            \item If $\deg{f} = 1$, then $f(x) = ax + b$ for some $a, b \in \ZZ$ with $a \neq 0$. From $f(x) = f^*(x)$, we must have $a = b$. Then $f (1 - x) = a(1 - x) + a = - ax + 2a = f(x) = ax + a$. Thus, $a = 0$ and $f = 0$, which is a contradiction. 
            
            \item If $\deg{f} = 2$, then $f(x) = ax^2 + bx + c$ for some $a, b, c \in \ZZ$ with $a \neq 0$. From $f = f^*$ it follows $a = c$. On the other hand, $f(1 - x) = f(x)$ implies
            \begin{align*}
                cx^2 + bx + c &= c(1 - x)^2 + b(1 - x) + c \\
                &= cx^2 - 2cx + c + b - bx + c \\
                &= cx^2 - (b + 2c)x + b + 2c.
            \end{align*}  
            Therefore, $b = -c$, and $f(x) = c(x^2 - x + 1)$. Since $\deg{\overline{f}} = 2$, $p \nmid c$. It remains to show that $p \equiv 2 \pmod{3}$. If $p = 2$, there is nothing to prove, so assume $p$ is odd. From the irreducibility of $\overline{f}$ in $\FF_p[x]$, it follows that $x^2 - x + 1$ is irreducible over $\FF_p$. This fact is equivalent to \(4x^2-4x+4=(2x-1)^2+3\) not having a root, that is, \(p\neq 3\) and the Legendre symbol \(\left(\frac{-3}p\right)=-1\). By quadratic reciprocity, this is equivalent to \(\left(\frac p{3}\right)=-1\), which happens if and only if $p \equiv 2 \pmod{3}$.\qedhere 
        \end{itemize}
    \end{proof}

    The following corollary suggests that the investigation of the irreducibility of polynomials $\tilde{K}_n$ might be difficult.
    
    \begin{corollary}
        $\tilde K_{n}$ is constant for $n = 2, 3, 4, 5, 7$ and is reducible modulo each prime $p$ otherwise. 
    \end{corollary}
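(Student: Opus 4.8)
The plan is to split the statement into its two halves. The constancy claim for $n = 2,3,4,5,7$ is nothing new: it is precisely Remark \ref{constant_values}, so I would simply invoke it. All the substance lies in the second half, where I would feed $\tilde K_n$ into Theorem \ref{prime_reductions} and argue that, for every $n \notin \{2,3,4,5,7\}$ and every prime $p$, the only surviving alternative in that theorem's trichotomy is reducibility of $\overline{\tilde K_n}$ over $\FF_p$.

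First I would verify that $\tilde K_n$ meets the hypotheses of Theorem \ref{prime_reductions}. By Remark \ref{constant_values}, $\tilde K_n$ is non-constant exactly when $n \notin \{2,3,4,5,7\}$, which is the range in question. The required symmetry $f(1-x) = f(x) = f^*(x)$ follows from Proposition \ref{prop:sym}: the transformation $x \mapsto 1-x$ gives $\tilde K_n(1-x) = \tilde K_n(x)$ (here the prefactor $(cx+d)^{\deg \tilde K_n}$ equals $1$), while $x \mapsto \tfrac1x$ gives $\tilde K_n(x) = x^{\deg \tilde K_n}\tilde K_n(\tfrac1x) = \tilde K_n^*(x)$, which is exactly the reciprocal condition.

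Next I would eliminate the first two branches of the trichotomy for an arbitrary prime $p$. The branch $\overline{\tilde K_n} = 0$ is impossible because $\tilde K_n$ has content $1$ by construction: dividing $K_n$ by its content and by the monic ``trivial'' factors preserves primitivity by Gauss's Lemma, so no prime divides all its coefficients. The branch $\tilde K_n = c(x^2-x+1)$ is impossible because $\deg \tilde K_n = d_n$ is divisible by $6$, so for non-constant $\tilde K_n$ we have $d_n \geq 6 > 2$. With both branches excluded, Theorem \ref{prime_reductions} forces $\overline{\tilde K_n}$ to be reducible in $\FF_p[x]$; since $p$ was arbitrary, $\tilde K_n$ is reducible modulo every prime.

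As for difficulty, there is no genuine obstacle here, since Theorem \ref{prime_reductions} does all the heavy lifting and this corollary is merely its clean specialization to the family $\tilde K_n$. The only points that demand care are the two bookkeeping facts used to discard the degenerate branches, namely the primitivity of $\tilde K_n$ and the divisibility $6 \mid d_n$, both of which are already recorded in the text. I would therefore expect the write-up to be short, with the verification of the hypotheses being the most one needs to spell out.
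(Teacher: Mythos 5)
Your proof is correct and follows essentially the same route as the paper: invoke Remark \ref{constant_values} for the constancy claim, then apply Theorem \ref{prime_reductions} to the non-constant primitive polynomial $\tilde K_n$ and rule out the two degenerate branches of its trichotomy. The only cosmetic difference is that you exclude the $c(x^2-x+1)$ case via $6 \mid \deg \tilde K_n$, whereas the paper notes that $\tilde K_n$ is coprime to $x^2-x+1$ by construction; both observations are immediate from the text.
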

    \begin{proof}
        By Remark \ref{constant_values}, $\tilde{K}_n$ is constant if and only if $n = 2, 3, 4, 5, 7$. Otherwise, $\tilde{K}_n$ is non-constant and primitive, hence, by Theorem \ref{prime_reductions}, either $\tilde{K}_n$ is reducible modulo $p$ or $\tilde{K}_n(x) = c(x^2 - x + 1)$ for some nonzero $c \in \ZZ$. From the definition of $\tilde{K}_n(x)$, it is coprime with $x^2 - x + 1$ over $\QQ$. Thus, the second case is impossible, and $\tilde{K}_n$ is reducible modulo $p$. 
    \end{proof}

    \begin{corollary} \label{cor3.10}
        Suppose $n$ is even, square-free, or square of a prime and $n \neq 2, 3, 4, 5, 7$, each irreducible factor of $\tilde{K}_n$ over $\ZZ$ is reducible modulo each prime $p$. 
    \end{corollary}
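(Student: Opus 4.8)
The plan is to combine the two preceding theorems: Theorem \ref{factorsymmetry} forces every positive-degree irreducible factor of \(\tilde K_n\) to carry the full \(H\)-symmetry, and Theorem \ref{prime_reductions} then shows that any such symmetric polynomial must become reducible modulo every prime, apart from two exceptional shapes that I will rule out. Concretely, I would first dispose of the degenerate case: if \(n \in \{2,3,4,5,7\}\), then \(\tilde K_n\) is constant by Remark \ref{constant_values}, so it has no positive-degree irreducible factors and the claim holds vacuously.

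For all other \(n\) satisfying the hypothesis (even, square-free, or the square of a prime), the remark following Theorem \ref{factorsymmetry} guarantees that \(\tilde K_n(0)\) is square-free, so Theorem \ref{factorsymmetry} applies. Let \(P \in \ZZ[x]\) be a positive-degree irreducible factor of \(\tilde K_n\); as a positive-degree irreducible element of \(\ZZ[x]\), such a \(P\) is automatically primitive. Theorem \ref{factorsymmetry} yields \(P(x) = P(1-x) = P^*(x)\), so \(P\) satisfies exactly the hypotheses of Theorem \ref{prime_reductions}. Fixing any prime \(p\) and writing \(\overline P\) for the reduction of \(P\) modulo \(p\), Theorem \ref{prime_reductions} leaves only three possibilities, and I would eliminate the first two. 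The branch \(\overline P = 0\) is impossible because \(P\) is primitive, hence not all of its coefficients are divisible by \(p\). The branch \(P(x) = c(x^2 - x + 1)\) is impossible because \(\tilde K_n\) is coprime to \(x^2 - x + 1\) over \(\QQ\) by construction (this factor is divided out in Definition \ref{ktilde}), so no factor of \(\tilde K_n\) can be a scalar multiple of \(x^2 - x + 1\). The surviving alternative is precisely that \(\overline P\) is reducible in \(\FF_p[x]\), which is the desired conclusion; since \(p\) was arbitrary, it holds for every prime.

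I do not expect a genuine obstacle, as the statement is a direct corollary and the substantive work was already carried by Theorems \ref{factorsymmetry} and \ref{prime_reductions}. The only points requiring care are the two verifications that keep us out of the exceptional branches of Theorem \ref{prime_reductions}: primitivity of the irreducible factor \(P\) (ruling out \(\overline P = 0\)) and the coprimality of \(\tilde K_n\) with \(x^2 - x + 1\) (ruling out the \(c(x^2 - x + 1)\) branch). One should also confirm that the symmetry output of Theorem \ref{factorsymmetry} is delivered as the exact polynomial identities \(P(x) = P(1-x) = P^*(x)\) rather than merely up to a nonzero scalar; this is where the even degree of \(P\) and the normalization \(P = P^*\) established inside that theorem's proof are needed.
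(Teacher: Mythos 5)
Your proposal is correct and follows essentially the same route as the paper: apply Theorem \ref{factorsymmetry} (via the square-freeness of $\tilde K_n(0)$) to get $P(x)=P(1-x)=P^*(x)$, then invoke Theorem \ref{prime_reductions} and exclude the two exceptional branches using primitivity of $P$ and the coprimality of $\tilde K_n$ with $x^2-x+1$. Your extra care about the constant cases $n\in\{2,3,4,5,7\}$ and the non-constancy hypothesis of Theorem \ref{prime_reductions} is a harmless refinement of what the paper leaves implicit.
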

    \begin{proof}
        By Theorem \ref{factorsymmetries_intro}, each irreducible factor $f$ of $\tilde{K}_n$ satisfies $f(1 - x) = f(x) = f^*(x)$. On the other hand, since $\tilde{K}_n$ is primitive and is coprime to $x^2 - x + 1$, $f$ is irreducible and coprime to $x^2 - x + 1$ as well. Thus, $f$ is reducible modulo each prime $p$ by Theorem \ref{prime_reductions}. 
    \end{proof}

    The following corollary is a generalization of Helou's result from the case of Cauchy--Mirimanoff polynomials of odd prime index, to a larger class of values of $n$:

    \begin{corollary}
        Suppose $n$ is even, square-free, or square of a prime and $n \neq 2, 3, 4, 5, 7$. If there is a prime $p$ such that $\tilde{K}_n$ factorizes into at most $3$ irreducible factors over $\FF_p$, then $\tilde{K}_n$ is irreducible over $\QQ$.
    \end{corollary}
    \begin{proof}
        This directly follows from Corollary \ref{cor3.10}.
    \end{proof}
	
    \section{Irreducibility results for \(\tilde{K}_{2m}\)} \label{sec:6m}
    In this section, we will prove the irreducibility of some of the polynomials $\tilde{K}_{2m}$. First, we will prove the following two propositions that give the irreducibility of $\tilde{K}_{3^a + 3^b}$ and $\tilde{K}_{2 \cdot 3^a}$, for $a, b \ge 1$ and then move on to the irreducibility of $\tilde{K}_{2lp}$ for $l \in \{1, 2, 3\}$ and $p$ prime.

    \begin{proposition}\label{p=3}
        For $a, b \ge 1$, $\tilde{K}_{3^a + 3^b}$ is irreducible over $\QQ$. 
    \end{proposition}
    \begin{proof}
        Note that $\tilde{K}_{3^a + 3^b} = K_{3^a + 3^b}$. In \(\FF_3\), we have the following equality:
        \begin{align*}
            K_{3^a + 3^b}(x) &= x^{3^a+3^b} + (1-x)^{3^a+3^b} + 1 \\
            &= x^{3^a+3^b}+(1-x)^{3^a}(1-x)^{3^b} + 1 \\
            &= x^{3^a+3^b} + (1-x^{3^a})(1-x^{3^b})+1 \\
            &= x^{3^a+3^b}+1-x^{3^a}-x^{3^b}+x^{3^a+3^b}+1 \\ 
            &= 2x^{3^a+3^b} +2x^{3^a}+2x^{3^b}+2 \\ 
            &= 2(x^{3^a}+1)(x^{3^b}+1) \\ 
            &= -(x+1)^{3^a}(x+1)^{3^b} \\
            &=-(x+1)^{3^a+3^b}.
        \end{align*}
        Therefore, Eisenstein's criterion of irreducibility is applicable to \(K_{3^a + 3^b}(x-1)\). Since the constant term of \(K_{3^a + 3^b}(x-1)\) equals 
        \[ K_{3^a + 3^b}(-1) = 2^{3^a + 3^b} + 2 = 64^{(3^a + 3^b)/6} + 2 \equiv 3 \pmod 9, \]
        Eisenstein's criterion concludes the proof.
    \end{proof}

    \begin{proposition}\label{p=2}
        For $a \ge 1$, $\tilde{K}_{3 \cdot 2^a}$ is irreducible over $\QQ$.
    \end{proposition}
    \begin{proof}
        Note that $\tilde{K}_{3 \cdot 2^a} = K_{3 \cdot 2^a}$. Assume \(K_{3\cdot 2^a}(x) = A(x)B(x)\). Note that \(K_{3\cdot 2^a}(0) = 2\), so we can assume \(A(0) = 1, B(0) = 2\) without loss of generality. According to Theorem \ref{factorsymmetries_intro}, \(A(x) = A(1-x) = A^*(x)\) and \(B(x) = B(1-x)=B^*(x)\), so we also have \(A(1) = 1, B(1) = 2\). Over \(\FF_2\),
        \begin{align*}
            K_{3 \cdot 2^a}(x) &= x^{3\cdot 2^a} + (1 - x)^{3 \cdot 2^a} + 1 \\
            &= \left( x^3 + (1 - x)^3 + 1\right)^{2^a} \\
            &= \left( x^2 + x \right)^{2^a} \\ 
            &= x^{2^a}(x + 1)^{2^a}.
        \end{align*}
        Since $A(0) = A(1) = 1$, $A(x)$ is coprime with $x(x + 1)$ and divides \((x^2+x)^{2^a}\) in $\FF_2[x]$. Thus, $A = 1$ modulo $2$. Since $A^* = A$, the leading coefficient and the constant term of $A$ are equal. Since $A = 1$ over $\FF_2$, it follows that $A$ is constant in $\ZZ[x]$. Thus, $K_{3\cdot 2^a}$ is irreducible over $\QQ$. 
    \end{proof} 

    \begin{theorem}
        Let $p$ be an odd prime. Then $\tilde{K}_{2p}$ is irreducible over $\QQ$.
    \end{theorem}
    \begin{proof}
        The case $p = 3$ follows from Proposition \ref{p=3}, so we may assume $p > 3$. Note that the leading coefficient of $\tilde{K}_{2p}$ is $2$, and $\tilde{K}_{2p}(x)$ divides $K_{2p}(x)$ over $\ZZ$. Since, over $\FF_p$, 
        \[
            K_{2p}(x) = x^{2p} + (1 - x)^{2p} + 1 = (x^2 + (1 - x)^2 + 1)^p = 2(x^2 - x + 1)^p
        \]
        and $\tilde{K}_{2p}(x) = \tilde{K}_{2p}(1 - x) = \tilde{K}_{2p}^*(x)$, $\tilde{K}_{2p}(x) = 2(x^2 - x + 1)^{d_{2p}/2}$ over $\FF_p$. By Theorem \ref{factorsymmetries_intro}, each factor $P$ of $\tilde{K}_{2p}$ over $\ZZ$ satisfies $P(x) = P(1 - x) = P^*(x)$, and hence satisfies the same equation over $\FF_p$. Therefore, each factor of $\tilde{K}_{2p}$ over $\FF_p$ has the form $c(x^2 - x + 1)^j$, where $c \in \FF_p^\times$ and $1 \le j \le \frac{d_{2p}}{2}$. Therefore, if $\tilde{K}_{2p}$ is reducible over $\QQ$, then $\tilde{K}_{2p}(\omega)$ has to be divisible by $p^2$ over $\ZZ$. We will verify by direct calculations that this is not the case, proving that $\tilde{K}_{2p}$ is irreducible over $\QQ$. 
        
        If $p \equiv 1 \pmod{3}$, then 
        \[
            \tilde{K}_{2p}(x) = T_{2p}(-x) = \frac{x^{2p} + (1 - x)^{2p} + 1}{x^2 - x + 1} = \frac{K_{2p}(x)}{x^2 - x + 1},
        \]
        and the Taylor expansion of $K_{2p}$ at $\omega$ has the form \(K'_{2p}(\omega)(x - \omega) + (x - \omega)^2Q(x)\) for some $Q \in \CC[x]$. Therefore, 
        \begin{align*}
            \tilde{K}_{2p}(x) &= \frac{K'_{2p}(\omega)+ (x - \omega)Q(x)}{x - 1 + \omega} \\
            &= \frac{2p(\omega^{2p - 1} - (1 - \omega)^{2p - 1}) + (x - \omega)Q(x)}{x - 1 + \omega} \\
            &= \frac{2p(2\omega - 1)+ (x - \omega)Q(x)}{x - 1 + \omega},
        \end{align*}
        and hence $\tilde{K}_{2p}(\omega) = 2p$. If $p \equiv 2 \pmod{3}$, then 
        \[
            \tilde{K}_{2p}(x) = T_{2p}(-x) = \frac{x^{2p} + (1 - x)^{2p} + 1}{(x^2 - x + 1)^2} = \frac{K_{2p}(x)}{(x^2 - x + 1)^2},
        \]
        and the Taylor expansion of $K_{2p}$ at $\omega$ has the form $\frac{K''_{2p}(\omega)}{2}(x - \omega)^2 + (x - \omega)^3Q(x)$ for some $Q \in \CC[x]$. Therefore, 
        \begin{align*}
            \tilde{K}_{2p}(x) &= \frac{K''_{2p}(\omega)+ 2(x - \omega)Q(x)}{2(x - 1 + \omega)^2} \\
            &= \frac{2p(2p - 1)(\omega^{2p - 2} + (1 - \omega)^{2p -2}) + 2(x - \omega)Q(x)}{2(x - 1 + \omega)^2} \\
            &= \frac{p(2 p - 1)(\omega^2 + (1 - \omega)^2)+ (x - \omega)Q(x)}{(x - 1 + \omega)^2} \\
            &= \frac{-p(2p - 1) + (x - \omega)Q(x)}{(x - 1 + \omega)^2},
        \end{align*}
        and hence $\tilde{K}_{2p}(\omega) = -\frac{p(2p - 1)}{(2\omega - 1)^2} = \frac{p(2p - 1)}{3}$. In either case, $p^2 \nmid \tilde{K}_{2p}(\omega)$, as desired. 
    \end{proof}

    \begin{theorem}
        Let $p$ be a prime. Then $\tilde{K}_{4p}$ is irreducible over $\QQ$. 
    \end{theorem}
    \begin{proof}
        Since $d_8 = 6$, Theorem \ref{factorsymmetries_intro} implies the irreducibility of $\tilde{K}_8$ over $\QQ$. Proposition \ref{p=3} implies the irreducibility of $\tilde{K}_{12}$ over $\QQ$. Hence, we may assume $p > 3$. Note that the leading coefficient of $\tilde{K}_{4p}$ is $2$, and $\tilde{K}_{4p}(x)$ divides $K_{4p}(x)$ over $\ZZ$. Since, over $\FF_p$, 
        \[
            K_{4p}(x) = x^{4p} + (1 - x)^{4p} + 1 = (x^4 + (1 - x)^4 + 1)^p = 2(x^2 - x + 1)^{2p}
        \]
        and $\tilde{K}_{4p}(x) = \tilde{K}_{4p}(1 - x) = \tilde{K}_{4p}^*(x)$, $\tilde{K}_{4p}(x) = 2(x^2 - x + 1)^{d_{4p}/2}$ over $\FF_p$. By Theorem \ref{factorsymmetries_intro}, each factor $P$ of $\tilde{K}_{4p}$ over $\ZZ$ satisfies $P(x) = P(1 - x) = P^*(x)$, and hence satisfies the same equation over $\FF_p$. Therefore, each factor of $\tilde{K}_{4p}$ over $\FF_p$ has the form $c(x^2 - x + 1)^j$, where $c \in \FF_p^\times$ and $1 \le j \le \frac{d_{4p}}{2}$. Therefore, if $\tilde{K}_{4p}$ is reducible over $\QQ$, then $\tilde{K}_{4p}(\omega)$ has to be divisible by $p^2$ over $\ZZ$. We will verify by direct calculations that this is not the case, proving that $\tilde{K}_{4p}$ is irreducible over $\QQ$. 
        
        If $p \equiv 1 \pmod{3}$, then 
        \[
            \tilde{K}_{4p}(x) = T_{4p}(-x) = \frac{x^{4p} + (1 - x)^{4p} + 1}{(x^2 - x + 1)^2} = \frac{K_{4p}(x)}{(x^2 - x + 1)^2},
        \]
        and the Taylor expansion of $K_{4p}$ at $\omega$ has the form $\frac{K''_{4p}(\omega)}{2}(x - \omega)^2 + (x - \omega)^3Q(x)$ for some $Q \in \CC[x]$. Therefore, 
        \begin{align*}
            \tilde{K}_{4p}(x) &= \frac{K''_{4p}(\omega)+ 2(x - \omega)Q(x)}{2(x - 1 + \omega)^2} \\
            &= \frac{4p(4p - 1)(\omega^{4p - 2} + (1 - \omega)^{4p -2}) + 2(x - \omega)Q(x)}{2(x - 1 + \omega)^2} \\
            &= \frac{2p(4p - 1)(\omega^2 + (1 - \omega)^2)+ (x - \omega)Q(x)}{(x - 1 + \omega)^2} \\
            &= \frac{-2p(4p - 1) + (x - \omega)Q(x)}{(x - 1 + \omega)^2},
        \end{align*}
        and hence $\tilde{K}_{4p}(\omega) = -\frac{2p(4p - 1)}{(2\omega - 1)^2} = \frac{2p(4p - 1)}{3}$. If $p \equiv 2 \pmod{3}$, then 
        \[
            \tilde{K}_{4p}(x) = T_{4p}(-x) = \frac{x^{4p} + (1 - x)^{4p} + 1}{x^2 - x + 1} = \frac{K_{4p}(x)}{x^2 - x + 1},
        \]
        and the Taylor expansion of $K_{4p}$ at $\omega$ has the form $K'_{4p}(\omega)(x - \omega) + (x - \omega)^2Q(x)$ for some $Q \in \CC[x]$. Therefore, 
        \begin{align*}
            \tilde{K}_{4p}(x) &= \frac{K'_{4p}(\omega)+ (x - \omega)Q(x)}{x - 1 + \omega} \\
            &= \frac{4p(\omega^{4p - 1} - (1 - \omega)^{4p - 1}) + (x - \omega)Q(x)}{x - 1 + \omega} \\
            &= \frac{4p(2\omega - 1)+ (x - \omega)Q(x)}{x - 1 + \omega},
        \end{align*}
        and hence $\tilde{K}_{4p}(\omega) = 4p$. In either case, $p^2 \nmid \tilde{K}_{4p}(\omega)$, as desired.
    \end{proof}

    The proofs of the last two theorems were based on the facts that $K_2(x) = 2(x^2 - x + 1)$ and $K_4(x) = 2(x^2 - x + 1)^2$. However, we do not have similar nice equalities for polynomials $K_{2m}$ with $m \ge 3$, and the proofs of the irreducibility of polynomials $K_{2lp}$, with $l$ being a fixed integer, become more difficult and require additional conditions. Below, we will try to prove a similar theorem for the polynomials $\tilde{K}_{6p^e}$, which includes the case $\tilde{K}_{6p}$. First, we will need the following lemma:
    
    \begin{lemma} \label{splitting}
        If $K_6(x) = 2x^6 - 6x^5  + 15x^4 - 20x^3 + 15x^2 - 6x + 2 $ has a root modulo an odd prime $p$, then it splits over $\FF_p$. Furthermore, \(H\) acts transitively on the roots of \(K_6\) in $\FF_p$.
    \end{lemma}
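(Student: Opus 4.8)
The plan is to exploit the $H$-symmetry of $K_6$ (Proposition \ref{prop:sym}, using $\tilde K_6 = K_6$) together with an orbit--stabilizer count. Suppose $\alpha\in\FF_p$ is a root of $K_6$, with $p$ odd. Writing $K_6(x)=x^6+(1-x)^6+1$ gives $K_6(0)=K_6(1)=2\not\equiv 0\pmod p$, so $\alpha\neq 0,1$ and every transformation in $H$ is defined at $\alpha$ (the only denominators occurring are $1$, $x$, and $x-1$). By Proposition \ref{prop:sym}, for each $h\colon x\mapsto\frac{ax+b}{cx+d}$ in $H$ we have $K_6(h(\alpha))=(c\alpha+d)^{-6}\cdot(c\alpha+d)^{6}K_6(\alpha)=0$, so all six images $h(\alpha)$, $h\in H$, are again roots lying in $\FF_p$. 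If these six images are distinct, then $K_6$, being of degree $6$, has six roots in $\FF_p$ and therefore splits; so I only need to control the case where the $H$-orbit of $\alpha$ is shorter than $6$.

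By orbit--stabilizer, a short orbit forces $\alpha$ to be fixed by some nonidentity element of $H$. The first step is to list the fixed points: the involutions $x\mapsto 1-x$, $x\mapsto 1/x$, $x\mapsto x/(x-1)$ fix $\tfrac12$, $\pm1$, and $\{0,2\}$ respectively, while the two order-$3$ elements fix the roots of $x^2-x+1$. Discarding $0$ and $1$ (which are not roots), I check the cost of such a point being a root: from the compact form one reads off $K_6(-1)=K_6(2)=66$ and $K_6(\tfrac12)=\tfrac{33}{32}$, so, as $p$ is odd, $\alpha\in\{-1,2,\tfrac12\}$ being a root forces $p\mid 33$, i.e. $p\in\{3,11\}$; and since $x^3\equiv -1$ and $(1-x)^3\equiv -1$ modulo $x^2-x+1$ give $K_6\equiv 3\pmod{x^2-x+1}$ in $\ZZ[x]$, a root with $\alpha^2-\alpha+1=0$ forces $p=3$. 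Hence for every odd $p\notin\{3,11\}$ the stabilizer is trivial, the orbit has size $6$, and $K_6$ splits over $\FF_p$.

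The two remaining primes are the only genuinely computational step, and they are the expected obstacle: at $p=3,11$ the polynomial is no longer square-free (these are exactly the odd prime divisors of $\disc K_6$ by Proposition \ref{Discriminant_Formula}), so the clean orbit argument degenerates and must be replaced by an explicit factorization. Reducing $K_6=x^6+(1-x)^6+1$ directly yields
\[
K_6\equiv 2(x+1)^6\pmod 3,\qquad K_6\equiv 2\big((x+1)(x-2)(x-6)\big)^2\pmod{11},
\]
both products of linear factors, so $K_6$ splits over $\FF_3$ and $\FF_{11}$ as well. Combining the three cases proves the first assertion.

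For the transitivity statement I run the identical fixed-point analysis over $\CC$. The candidate fixed points are $0,1,\tfrac12,-1,2$ together with the primitive sixth roots of unity $\omega,\bar\omega$ (the roots of $x^2-x+1$), and evaluating $K_6$ gives the values $2,2,\tfrac{33}{32},66,66,3,3$, none of which vanishes. Thus no complex root of $K_6$ is fixed by a nonidentity element of $H$, so $H$ acts freely on the root set; since $K_6$ has exactly six roots and $|H|=6$, they form a single orbit, which is precisely the claimed transitivity of the $H$-action.
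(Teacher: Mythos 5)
Your argument for the splitting assertion is essentially the paper's: show that for odd $p\neq 3,11$ no nonidentity element of $H$ fixes a root of $K_6$ over $\FF_p$ (via the fixed points $\tfrac12,\pm1,0,2$ and the roots of $x^2-x+1$, whose $K_6$-values are nonzero modulo such $p$), conclude by orbit--stabilizer that the action is free, so the six images of one root are six distinct roots and $K_6$ splits; then handle $p=3,11$ by the explicit factorizations $2(x+1)^6$ and $2\big((x+1)(x-2)(x-6)\big)^2$. Citing Proposition \ref{prop:sym} to justify that $H$ permutes the roots modulo $p$ is a welcome explicit step the paper glosses over, and your values $K_6(-1)=K_6(2)=66$ are the correct ones (the paper's $33$ is a harmless slip, since $p$ is odd).

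The gap is in the transitivity clause. The statement actually needed -- and invoked in the proof of Theorem \ref{irreducibility_criterion} -- is that $H$ acts transitively on the roots of $K_6$ \emph{over $\FF_p$}; transitivity on the six complex roots, which is what your final paragraph proves, is a different assertion and does not transfer to $\FF_p$ without an additional argument about reduction of roots. For $p\neq 3,11$ your free-action count already yields $\FF_p$-transitivity (exactly six roots forming a single $H$-orbit), but for $p=3$ and $p=11$ you establish only the splitting. The repair is immediate from your own factorizations: modulo $3$ there is a single root, so transitivity is vacuous, and modulo $11$ the roots are $2,6,10$ with $6=2^{-1}$ and $10=1-2$, so they form one $H$-orbit. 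Add these two checks (as the paper does) so that the transitivity statement holds in the form in which it is later used.
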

    \begin{proof}
        Since $p$ is an odd prime and $K_6(0) = K_6(1) = 2$, $0$ and $1$ are not roots of $K_6$ in $\FF_p$, and the transformations of $H$ are well-defined for the roots of $K_6$ over $\FF_p$. Suppose $p \neq 3, 11$. Note that these transformations form a group isomorphic to $S_3$ that acts on the set of roots of $K_6$ modulo $p$. We claim that this action is free. For this, we have to show that neither of these transformations fix any of the roots of $K_6$ over $\FF_p$. Let $\alpha$ be a root of $K_6$ modulo $p$. 
        \begin{itemize}
            \item If $\alpha = 1 - \alpha$, then $\alpha = \frac{1}{2}$ in $\FF_p$ and $K_6(\alpha) = \frac{33}{32}$. Since $p \neq 3, 11$, this is a contradiction. 

            \item If $\alpha = \frac{1}{\alpha}$, then $\alpha = \pm 1$. However, $K_6(1) = 2$, and $K_6(-1) = 66$, and $p \neq 2, 3, 11$, yielding a contradiction. 

            \item If $\alpha = \frac{\alpha}{\alpha - 1}$, then $\alpha = 2$, and $p \mid K_6(2) = 66$, which is again impossible. 

            \item If $\alpha = \frac{1}{1 - \alpha}$, then $\alpha^2 - \alpha + 1 = 0$, in $\FF_p$. Thus, $\alpha^3 = -1$ and 
            \begin{align*}
                K_6(\alpha) &= \alpha^6 + (1 - \alpha)^6 + 1 \\
                &= 1 + (\alpha^2 - 2\alpha + 1)^3 + 1 \\
                &= 2 + (-\alpha)^3 \\
                &= 3. 
            \end{align*}
            This is again a contradiction since $p \neq 3$. 

            \item If $\alpha = \frac{\alpha - 1}{\alpha}$, then $\alpha^2 - \alpha + 1 = 0$ in $\FF_p$, and we can proceed as in the previous case. 
        \end{itemize}
        
        Thus, a group of order $6$ acts freely on the set of roots of $K_6$ over $\FF_p$. Hence the stabilizers of this action are trivial, and by the Orbit-Stabilizer theorem, the orbits of this action have order $6$. Since $K_6$ is a polynomial of degree $6$ over $\FF_p$, it has at most $6$ roots. Thus, this action is transitive and $K_6$ has exactly $6$ roots. 

        For $p = 3$, note that $K_6(x) = 2(x - 2)^6$ over $\FF_p$ and the claim holds trivially. 

        For $p = 11$, note that $K_6(x) = 2(x - 2)^2(x - 6)^2(x - 10)^2$ over $\FF_p$, so the polynomial splits modulo $p$. It remains to note that $6 = 2^{-1}$ and $10 = 1 - 2$ in $\FF_{11}$, so the action is transitive and any two distinct roots can be obtained from each other by one of the transformations of $H$.
    \end{proof}
    
    The following theorem gives a sufficient condition for the irreducibility of the polynomials $K_{6m}$, when $m$ is a power of a prime. 
    
    \begin{theorem} \label{irreducibility_criterion}
        Let $p$ be an odd prime. Suppose the following conditions hold:
        \begin{enumerate}[label=(\alph*)]
            \item $K_{6}$ has a root over $\FF_p$,
            \item $p^2 \nmid K_{6p}(\alpha)$, for some $\alpha \in \mathbb{Z}$ such that $p \mid K_6(\alpha)$.
        \end{enumerate}
        Then $\tilde{K}_{6p^e}$ is irreducible over $\QQ$ for each positive integer $e$. 
    \end{theorem}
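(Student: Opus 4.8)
The plan is to prove irreducibility $p$-adically: I will exhibit over $\QQ_p$ an irreducible factor of $K_{6p^e}$ of degree $\mu := \deg K_{6p^e}/r$, where $r$ is the number of distinct roots of $K_6$ in $\FF_p$, and then use the $H$-symmetry of Theorem \ref{factorsymmetry} together with the transitivity of Lemma \ref{splitting} to propagate irreducibility to the whole polynomial. First I would reduce modulo $p$. Since $6p^e$ is even and $(1-x)^{p^e}=1-x^{p^e}$ in $\FF_p[x]$, one gets $K_{6p^e}(x)\equiv K_6(x^{p^e})\pmod p$. By hypothesis (a) and Lemma \ref{splitting}, $K_6$ splits over $\FF_p$; writing $K_6\equiv 2\prod_{i=1}^r(x-\alpha_i)^{\nu_i}$ with distinct $\alpha_i\in\FF_p$ and using $x^{p^e}-\alpha_i=(x-\alpha_i)^{p^e}$, I obtain $K_{6p^e}(x)\equiv 2\prod_{i=1}^r(x-\alpha_i)^{p^e\nu_i}\pmod p$. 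The leading coefficient $2$ is a unit in $\ZZ_p$, so by Hensel's lemma this coprime factorization lifts to $K_{6p^e}=2\prod_i F_i$ over $\ZZ_p$, with each $F_i$ monic and $\overline{F_i}=(x-\alpha_i)^{p^e\nu_i}$.

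The technical heart is to compute, for the root $\alpha\in\ZZ$ provided by hypothesis (b), the valuation $v_p(K_{6p^e}(\alpha))$, and to show it equals $1$ for every $e$. Setting $A=\alpha^6$ and $B=(1-\alpha)^6$ (both units modulo $p$, since $K_6(0)=K_6(1)=2$), one has $a_e:=K_{6p^e}(\alpha)=A^{p^e}+B^{p^e}+1$, and $a_e\equiv a_0=K_6(\alpha)\equiv 0\pmod p$ by Fermat's little theorem. Expanding $u^p+v^p=(u+v)^p-pN_{e-1}$ with $u=A^{p^{e-1}}$, $v=B^{p^{e-1}}$ gives the recursion $a_e=(a_{e-1}-1)^p+1-pN_{e-1}$. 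Here $(a_{e-1}-1)^p+1$ equals $p\,a_{e-1}$ plus terms of $p$-adic valuation at least $1+2v_p(a_{e-1})$, so it has valuation $1+v_p(a_{e-1})\geq 2$; and since $A^{p^{e-1}}\equiv A$ and $B^{p^{e-1}}\equiv B\pmod p$, the integer $N_{e-1}\equiv N_0:=\tfrac1p\sum_{k=1}^{p-1}\binom pk A^kB^{p-k}\pmod p$ is independent of $e$ modulo $p$. Hypothesis (b), namely $v_p(a_1)=1$, forces via the $e=1$ instance of the recursion that $v_p(N_0)=0$; hence $v_p(pN_{e-1})=1$ for all $e$, and the recursion yields $v_p(a_e)=1$ by induction on $e$. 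I expect this uniform-in-$e$ valuation statement — in particular the observation that $N_{e-1}$ is constant modulo $p$ while condition (b) pins $N_0$ to a unit — to be the main obstacle.

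With $v_p(K_{6p^e}(\alpha))=1$ in hand, the rest is formal. Let $F_\alpha$ be the Hensel factor with $\overline{F_\alpha}=(x-\alpha)^{\mu}$, where $\mu=p^e\nu$ and $\nu$ is the multiplicity of $\alpha$. Because $\overline{F_\alpha}(x+\alpha)=x^{\mu}$, every non-leading coefficient of $F_\alpha(x+\alpha)$ is divisible by $p$, while its constant term $F_\alpha(\alpha)$ has valuation $v_p(K_{6p^e}(\alpha))=1$ (the remaining factors are units at $\alpha$, as the $\alpha_i$ are distinct). Thus $F_\alpha(x+\alpha)$ is Eisenstein at $p$, and $F_\alpha$ is irreducible over $\QQ_p$ of degree $\mu$.

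Finally, let $P\in\ZZ[x]$ be a nonconstant irreducible factor of $K_{6p^e}$. By Theorem \ref{factorsymmetry}, $P$ satisfies $P(x)=P(1-x)=P^*(x)$, so $\overline P$ is invariant under $H$; since $H$ permutes the $\alpha_i$ transitively by Lemma \ref{splitting}, the factors $(x-\alpha_i)$ occur in $\overline P$ with one common multiplicity $m$, and $m\geq 1$ as $P$ is nonconstant. Viewing $P\mid K_{6p^e}$ over $\QQ_p$, its part supported at $\alpha$ divides the irreducible $F_\alpha$, hence equals $F_\alpha$; therefore $m=\mu$ and $\deg P=r\mu=6p^e$, so $P$ is, up to a constant, $K_{6p^e}$ itself, which proves the irreducibility of $K_{6p^e}$ over $\QQ$.
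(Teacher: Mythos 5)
Your argument is correct, and it reaches the conclusion by a genuinely different (and more machinery-heavy) route than the paper's. Both proofs share the same skeleton: reduce modulo $p$ to get $K_{6p^e}\equiv 2\prod_i(x-\alpha_i)^{p^e\nu_i}$, use condition (b) to show that $p$ divides $K_{6p^e}(\alpha)$ exactly once for \emph{every} $e$, and then combine Theorem \ref{factorsymmetry} with the transitivity of $H$ from Lemma \ref{splitting} to force any nonconstant irreducible factor to have full degree. The differences lie in the two middle steps. For the uniform-in-$e$ valuation statement, which you single out as the main obstacle, the paper has a much shorter argument: since $K'_{6p^e}\equiv 0\pmod p$, the value of $K_{6p^e}(a)$ modulo $p^2$ depends only on $a$ modulo $p$, and $a^{6p^e}\equiv a^{6p}\pmod{p^2}$ because $p(p-1)\mid 6p^e-6p$; hence $K_{6p^e}(\alpha)\equiv K_{6p}(\alpha)\pmod{p^2}$ and condition (b) transfers verbatim to every $e$. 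Your binomial recursion $a_e=(a_{e-1}-1)^p+1-pN_{e-1}$ with $N_{e-1}\equiv N_0\pmod p$ proves the same fact correctly, just with more computation. For the endgame, the paper never leaves $\FF_p$: exactly one irreducible factor $f_1$ can vanish at $\alpha$ modulo $p$, so $f_1$ absorbs the full multiplicity $p^e\nu$ at $\alpha$, and the $H$-symmetry spreads that multiplicity to every $\alpha_i$, giving $\deg f_1=6p^e$ directly. Your Hensel-plus-Eisenstein construction of the totally ramified local factor $F_\alpha$ over $\QQ_p$ is the structural version of the same multiplicity count; it costs you $\ZZ_p$, Hensel's lemma, and the Eisenstein/Newton-polygon criterion, but buys extra information (total ramification of $p$ in the splitting field of the factor). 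One small point to make explicit: the final equality $\deg P=r\mu=6p^e$ uses that all the multiplicities $\nu_i$ coincide, which follows from the $H$-invariance of $K_6$ modulo $p$ together with transitivity (or simply from the explicit case analysis in Lemma \ref{splitting}).
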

    \begin{proof}
        Note that $\tilde{K}_{6p^e} = K_{6p^e}$. It is well known that for any polynomial $f \in \ZZ[x]$, $f(x + p) - f(x) = pf'(x) \pmod{p^2}$. Since $K'_{6p^e}(x) = 0$ over $\FF_p$, it follows that for an integer $a \in \ZZ$, the residue of $K_{6p^e}(a)$ modulo $p^2$ depends only on the residue of $a$ modulo $p$. Note that for any $a \in \ZZ$, $K_{6p^e}(a) \equiv K_{6p}(a) \pmod{p^2}$. 

        Now let $\alpha$ be an integer such that $p \mid K_6(\alpha)$. Then $K_6$ has a root over $\FF_p$, and hence by Lemma \ref{splitting}, $K_6$ splits over $\FF_p$, and $H$ acts transitively on the set of roots of $K_6$ over $\FF_p$. 
        
        Suppose $K_6(x) = 2 \prod_{j = 1}^6(x - \alpha_j)$ over $\FF_p$, where $\alpha = \alpha_1$. Then for a fixed positive integer $e$ 
        \begin{align*}
            K_{6p^e}(x) &= x^{6p^e} + (1 - x)^{6p^e} + 1 \\
            &= (x^6 + (1 - x)^6 +  1)^{p^e} \\
            &= K_6(x)^{p^e} \\
            &= 2^{p^e} \prod_{j = 1}^6 (x - \alpha_j)^{p^e} \\
            &= 2 \prod_{j = 1}^6 (x - \alpha_j)^{p^e}
        \end{align*}
        over $\FF_p$. Suppose $K_{6p^e}$ is reducible over $\QQ$. Then write $K_{6p^e} = f_1(x)  \dotsm f_s(x)$, where $f_1, \ldots, f_s \in \ZZ[x]$ are irreducible and have positive leading coefficients. Then
        \begin{align*}
            2\prod_{j = 1}^6 (x - \alpha_j)^{p^e} = f_1(x) \dotsm f_s(x).
        \end{align*}
        over $\FF_p$. From the condition (b), it follows that there is a unique index $i \in \{1, 2, \ldots, s\}$ such that $p \mid f_i(\alpha)$. Without loss of generality, assume that $i = 1$. Since \(f_j(\alpha) \not \equiv 0\pmod p\) for indices \(j>1\), the multiplicities of $\alpha$ in $K_{6p^e}$ and $f_1$ are equal over $\FF_p$. By Theorem \ref{factorsymmetries_intro}, $f_1$ satisfies $f_1(x) = f_1(1 - x) = f_1^*(x)$. Thus, it satisfies the same equalities in $\FF_p[x]$. Since $H$ acts transitively on the set of roots of $K_6$, all the roots of $K_6$ over $\FF_p$ are roots of $f_1$ over $\FF_p$ as well. Furthermore, since $f_1(x) = f_1(1 - x) = f_1^*(x)$, the multiplicities of the roots of $f_1$ are the same. Therefore $K_{6p^e} = f_1$ over $\FF_p$. But then $s = 1$, and $K_{6p^e}$ is irreducible. 
    \end{proof}

    \begin{remark}
        The irreducibility of $K_{6p^e}$, for $p = 2$ and $p = 3$, follows from Propositions \ref{p=2} and \ref{p=3}, respectively.
    \end{remark}

    \begin{example}
        In the proof of Lemma \ref{splitting} it was noted that $2$ is a root of $K_6$ modulo $11$. On the other hand, $K_{66}(2) = 73786976294838206466$, which is not divisible by $11^2$. Therefore, by Theorem \ref{irreducibility_criterion}, $K_{6 \cdot 11^e}$ is irreducible over $\QQ$ for each positive integer $e$. 
    \end{example}

    \begin{example}
        Note that $4$ is a root of $K_6$ modulo $19$. Unfortunately, $K_{114}(4)$ is divisible by $19^2$, and hence Theorem \ref{irreducibility_criterion} is not applicable in this case. 
    \end{example}
    
    \begin{remark}
        Computations with SageMath show that the only odd prime $p < 10000$ modulo which $K_6$ has a root, but the condition (b) of Theorem \ref{irreducibility_criterion} is not satisfied, is $19$. It is natural to ask whether $p = 19$ is the only such prime. Unfortunately, this assertion might be very difficult to prove and we do not have any results on this. Here is a list of all primes up to \(10000\) for which the conditions of Theorem \ref{irreducibility_criterion} are satisfied:
        \begin{center}
            3, 11, 71, 127, 149, 151, 173, 233, 283, 293, 313, 383, 397, 419, 443, 461, 569, 607, 647, 719, 761, 787, 947, 971, 983, 1051, 1213, 1231, 1237, 1321, 1327, 1361, 1367, 1439, 1453, 1481, 1499, 1511, 1549, 1553, 1601, 1741, 1759, 1889, 1949, 1999, 2003, 2027, 2029, 2251, 2267, 2287, 2393, 2399, 2423, 2441, 2551, 2557, 2647, 2677, 2683, 2689, 2711, 2741, 2797, 2843, 3001, 3037, 3079, 3307, 3433, 3449, 3457, 3491, 3559, 3571, 3581, 3593, 3697, 3761, 3797, 3907, 3967, 4001, 4003, 4079, 4099, 4133, 4139, 4273, 4289, 4397, 4457, 4567, 4637, 4639, 4643, 4789, 4801, 4817, 4831, 4909, 4943, 5003, 5011, 5023, 5113, 5197, 5281, 5297, 5303, 5351, 5407, 5413, 5477, 5573, 5623, 5849, 5879, 5927, 6037, 6073, 6089, 6091, 6121, 6229, 6379, 6619, 6719, 6761, 6779, 6791, 6833, 6883, 6907, 6961, 6983, 7151, 7187, 7229, 7297, 7307, 7411, 7451, 7457, 7489, 7541, 7547, 7561, 7573, 7589, 7621, 7673, 7681, 7757, 7853, 7867, 7949, 8101, 8111, 8117, 8191, 8209, 8231, 8233, 8243, 8311, 8443, 8527, 8581, 8623, 8681, 8707, 8731, 8761, 8863, 8867, 8963, 9103, 9109, 9127, 9133, 9137, 9187, 9241, 9391, 9397, 9437, 9521, 9533, 9623, 9791, 9811, 9887, 9901, 9907, 9923, 9941
        \end{center}
    \end{remark}

    \section{The Galois group and the discriminant} \label{discriminant_section}
    In this section, we investigate the Galois group and the discriminant of the polynomials $\tilde{K}_{2m}$. For a polynomial $h \in \CC[x]$, denote its discriminant by $\disc(h)$. 
    
    \begin{lemma} \label{Discriminant_Formula}
        Denote $\zeta = \zeta_{2m - 1} = e^{\frac{2i\pi}{2m - 1}}$. Then the following formula holds:
        \[
            \disc(K_{2m}) = (-1)^m(2m)^{2m}(2^{2m - 1} + 1) \left( \prod_{j = 1}^{m - 1} \Big(1 + (1 + \zeta^j)^{2m - 1}\Big)\right)^2.
        \]
    \end{lemma}
    \begin{proof}
        First, we claim that the numbers $\frac{\zeta^j}{1 + \zeta^j}$, $j = 1, 2, \ldots, 2m - 1$ are the roots of $K'_{2m}$. Since they are all distinct and $\deg{K'_{2m}} = 2m-1$, it suffices to show that these numbers are roots of $K'_{2m}$. Note that $K'_{2m}(x) = 2m \left(x^{2m - 1} - (1 - x)^{2m - 1} \right)$, so
        \[
            K'_{2m} \left( \frac{\zeta^j}{1 + \zeta^j } \right) = 2m \left( \left( \frac{\zeta^j}{1 + \zeta^j}\right)^{2m - 1} -  \left( \frac{1}{1 + \zeta^j}\right)^{2m - 1} \right) = 0.
        \]
        Note that the leading coefficients of $K_{2m}$ and $K'_{2m}$ are $2$ and $4m$, respectively. Therefore, 
        \begin{align*}
            \disc(K_{2m}) &= \frac{(-1)^{\frac{2m(2m -1)}{2}}}{2} \res(K_{2m}, K'_{2m}) \\
            &= \frac{(-1)^m (4m)^{2m}}{2} \prod_{j = 1}^{2m - 1} K_{2m} \left( \frac{\zeta^j}{1 + \zeta^j}\right) \\
            &= \frac{(-1)^m (4m)^{2m}}{2} \prod_{j = 1}^{2m - 1} \left(\left( \frac{\zeta^j}{1 + \zeta^j}\right)^{2m} + \left( \frac{1}{1 + \zeta^j} \right)^{2m} + 1 \right) \\
            &= \frac{(-1)^m (4m)^{2m}}{2} \cdot \frac{\prod_{j = 1}^{2m - 1} \left( \zeta^j + 1 + (1 + \zeta^j)^{2m} \right)}{\left( \prod_{j = 1}^{2m - 1}\left( 1 + \zeta^j \right)\right)^{2m}} \\
            &= \frac{(-1)^m (4m)^{2m}}{2} \cdot \frac{\prod_{j = 1}^{2m - 1} \left( 1 + (1 + \zeta^j)^{2m - 1} \right)}{\left( \prod_{j = 1}^{2m - 1}\left( 1 + \zeta^j \right)\right)^{2m - 1}}.
        \end{align*}
        Since $\zeta, \zeta^2, \ldots, \zeta^{2m - 1}$ are the roots of $g(x) = x^{2m - 1} - 1$, $g(x) = \prod_{j = 1}^{2m - 1} (x - \zeta^j)$, and hence $2 = -g(-1) = \prod_{j =1}^{2m - 1} (1 + \zeta^j)$. It follows that
        \begin{align*}
            \disc(K_{2m}) &= \frac{(-1)^m (4m)^{2m}}{2} \cdot \frac{\prod_{j = 1}^{2m - 1}  \left( 1 + (1 + \zeta^j)^{2m - 1} \right)}{2^{2m - 1}} \\
            &= (-1)^m(2m)^{2m} \prod_{j = 1}^{2m - 1} \left( 1 + (1 + \zeta^j)^{2m - 1} \right) \\
            &= (-1)^m (2m)^{2m} (1 + 2^{2m - 1}) \prod_{j = 1}^{2m - 2} \left( 1 + (1 + \zeta^j)^{2m - 1} \right).
        \end{align*}
        Finally, note that for $j = 1, 2, \ldots, m - 1$, $(1 + \zeta^j)^{2m - 1} = (1 + \zeta^{2m - 1 - j})^{2m - 1}$, and hence
        \begin{align*}
            \disc(K_{2m}) &= (-1)^m(2m)^{2m}(2^{2m - 1} + 1) \left( \prod_{j = 1}^{m - 1} \Big(1 + (1 + \zeta^j)^{2m - 1}\Big)\right)^2. \qedhere
        \end{align*}
    \end{proof}

    Since $\tilde{K}_{6m} = K_{6m}$, Lemma \ref{Discriminant_Formula} allows to compute the discriminant of $\tilde{K}_{6m}$. The following helps to compute the discriminant of $\tilde{K}_{6m + 2}$:

    \begin{corollary} 
        Denote $\xi = e^{\frac{2i\pi}{6m + 1}}$. Then the following formula holds:
        \[
            \disc(\tilde{K}_{6m + 2}) = \frac{(-1)^m(6m + 2)^{6m - 2} (2^{6m + 1} + 1)}{3(\cont(K_{6m + 2}))^{6m}} \left( \prod_{j = 1}^{3m} \left(1 + (1 + \xi^j)^{6m + 1} \right)\right)^2. 
        \]
    \end{corollary}
    \begin{proof}
        Note that 
        \[
            K_{6m + 2}(x) = \cont(K_{6m + 2}) \tilde{K}_{6m + 2}(x) (x^2 - x + 1). 
        \]
        Therefore, denoting $P(x) \colonequals \frac{K_{6m + 2}(x)}{x^2 - x + 1} = \cont(K_{6m + 2})\tilde{K}_{6m + 2}(x)$, we obtain $K_{6m + 2}(x) = P(x)(x^2 - x + 1)$, and hence
        \[
            \disc(K_{6m + 2}) = \disc ( P(x) )  \disc(x^2 - x + 1) 
            \res ( P(x), x^2 - x + 1 )^2. 
        \]
       Note that $\disc(x^2 - x + 1) = -3$, and
       \[
           \res(P(x), x^2 - x + 1) = P(\omega)P(\overline{\omega}) = P(\omega)P(1 - \omega) = P(\omega)^2.
       \]
       On the other hand, 
       \[
            K_{6m + 2}'(x) = (6m + 2)\left( x^{6m + 1} + (x - 1)^{6m + 1} \right) = P'(x)(x^2 - x + 1) + P(x)(2x - 1),
       \]
       and hence
       \[
            (2\omega - 1)P(\omega) = (6m + 2) \left( \omega^{6m + 1} + (1 - \omega)^{6m + 1}\right) = (6m + 2)\left( \omega + \omega^2 \right) = (6m + 2)(2\omega - 1).  
       \]
       So, $P(\omega) = 6m + 2$ and $\res(P(x), x^2 - x + 1) = (6m + 2)^2$. Therefore, 
       \[
            \disc(P) = -\frac{\disc(K_{6m + 2})}{3(6m + 2)^4}.
       \]
       Since $P = \cont(K_{6m + 2})\tilde{K}_{6m + 2}$ and $\deg{P} = 6m$, $\disc(P) = (\cont(K_{6m + 2}))^{6m} \disc (\tilde{K}_{6m + 2})$. Combining this with Lemma \ref{Discriminant_Formula}, we obtain
       \[
            \disc(\tilde{K}_{6m + 2}) = \frac{(-1)^m(6m + 2)^{6m - 2} (2^{6m + 1} + 1)}{3(\cont(K_{6m + 2}))^{6m}} \left( \prod_{j = 1}^{3m} \left(1 + (1 + \xi^j)^{6m + 1} \right)\right)^2. \qedhere
       \]
    \end{proof}

    \begin{remark}
        This method does not work for the polynomials $\tilde{K}_{6m + 4}$ as the polynomials $K_{6m + 4}$ are not square-free, and hence Lemma \ref{Discriminant_Formula} only yields $\disc(K_{6m + 4}) = 0$. 
    \end{remark}

    \begin{corollary} \leavevmode \label{Quadratic_Subfield}
        \begin{itemize}
            \item The splitting field of $\tilde{K}_{6m}$ over $\QQ$ contains $\sqrt{(-1)^m(2^{6m - 1} + 1)}$.
            \item The splitting field of $\tilde{K}_{6m + 2}$ over $\QQ$ contains $\sqrt{3(-1)^m(2^{6m + 1} + 1)}$.
        \end{itemize}
    \end{corollary}
    \begin{proof}
        We will only prove the first assertion, the second one can be proved similarly. Having Lemma \ref{Discriminant_Formula}, it suffices to show that $S = \prod_{j = 1}^{3m - 1} \Big(1 + (1 + \zeta^j)^{6m - 1}\Big)$ is an integer. It is clear that $S$ is an algebraic integer, hence it suffices to show that $S$ is rational. Note that $S$ belongs to the cyclotomic field $\QQ(\zeta)$. For $u \in \Big( \ZZ/(6m - 1) \Big)^{\times}$ (the group of units), let $\sigma_u$ denote the automorphism of $\QQ(\zeta)$ sending $\zeta \rightarrow \zeta^u$. It is well known that these are all the possible automorphisms of $\QQ(\zeta)$. Since $\QQ(\zeta)/\QQ$ is a Galois extension, showing that $S$ is rational is equivalent to showing that $\sigma_u(S) = S$ for each $u \in \Big( \ZZ/(6m - 1)\Big)^\times$ (by the fundamental theorem of Galois theory). Note that for a fixed $u \in \Big( \ZZ/(6m - 1)\Big)^\times$, we have
        \[
            \sigma_u(S) = \prod_{j = 1}^{3m - 1} \Big(1 + (1 + \zeta^{uj})^{6m - 1}\Big). 
        \]
        Note that the numbers $u, 2u, \ldots, (3m - 1)u$ are distinct in $\ZZ/(6m - 1)$. Furthermore, for each index $j \in \{1, 2, \ldots, 3m - 1\}$, exactly one number among $u, 2u, \ldots, (3m - 1)u$ belongs to the pair $(j, -j)$. Since $(1 + \zeta^{j})^{6m - 1} = (1 + \zeta^{6m - 1 - j})^{6m - 1}$, it follows that 
        \begin{align*}
            \sigma_u(S) &= \prod_{j = 1}^{3m - 1} \Big(1 + (1 + \zeta^{uj})^{6m - 1}\Big) \\
            &= \prod_{j = 1}^{3m - 1} \Big(1 + (1 + \zeta^j)^{6m - 1}\Big) \\
            &= S. \qedhere
        \end{align*}
    \end{proof}

    \begin{proof}[Proof of Theorem \ref{thm:oddpermut}]
        If $n \equiv 10 \pmod{12}$, then $\deg{\tilde{K}_n} \equiv 2 \pmod{4}$. Thus, $\disc(\tilde{K}_{12m + 10})$ is negative, and hence the Galois group of $\tilde{K}_{12m + 10}$ over $\QQ$ contains an odd permutation.
    
        Thus, it remains to prove the cases $n = 6m$ and $n = 6m + 2$, where $m$ is a positive integer. This is equivalent to showing that $\disc({\tilde{K}_{6m}})$ and $\disc({\tilde{K}_{6m + 2}})$ are not perfect squares. Having Corollary \ref{Quadratic_Subfield}, it suffices to show that $2^{12m - 1} + 1$ and $3(2^{12m + 1} + 1)$ are not perfect squares.
        
        For $2^{12m - 1} + 1$, assume by contradiction that there is some integer $y \in \ZZ$ such that $2^{12m - 1} + 1 = y^2$. Then $2^{12m - 1} = (y - 1)(y + 1)$, and hence both $y - 1$ and $y + 1$ are powers of $2$. This occurs only when $y = 3$, but in that case $2^{12m - 1} + 1 = 9$ and $12m - 1 = 3$, which is a contradiction.  

        For $3(2^{12m + 1} + 1)$, simply observe that it is congruent to $3$ modulo $4$, and hence is not the square of an integer. 
    \end{proof}

    Recall that $b_n \colonequals \frac{\deg \tilde{K}_n}{6}$. The following theorem is a generalization of a result of Helou (see \cite{helou}) from the polynomials $E_n$ with odd $n$ to all the polynomials $\tilde{K}_n$.
    
    \begin{proposition}
        The Galois group of $\tilde{K}_n$ over $\QQ$ is isomorphic to an extension of a subgroup of $S_3^{b_n}$ by a subgroup of $S_{b_n}$, and hence the order of the Galois group of $\tilde{K}_n$ over $\QQ$ is less than or equal to $6^{b_n} \cdot b_n !$.
    \end{proposition}
    \begin{proof}
        Let $G$ denote the Galois group of $\tilde{K}_n$ over $\QQ$. Note that the roots of $\tilde{K}_n$ can be partitioned into $b_n$ $6$-tuples
        \begin{gather*}
            \left\{ \alpha_1, 1 - \alpha_1, \frac{1}{\alpha_1}, \frac{\alpha_1}{\alpha_1 - 1}, \frac{1}{1 - \alpha_1}, \frac{1 - \alpha_1}{\alpha_1} \right\}, \\
            \left\{ \alpha_2, 1 - \alpha_2, \frac{1}{\alpha_2}, \frac{\alpha_2}{\alpha_2 - 1}, \frac{1}{1 - \alpha_2}, \frac{1 - \alpha_2}{\alpha_2} \right\}, \\
            \dotsm \\
            \left\{ \alpha_{b_n}, 1 - \alpha_{b_n}, \frac{1}{\alpha_{b_n}}, \frac{\alpha_{b_n}}{\alpha_{b_n} - 1}, \frac{1}{1 - \alpha_{b_n}}, \frac{1 - \alpha_{b_n}}{\alpha_{b_n}} \right\}.
        \end{gather*}
        Let $\Omega$ denote the set of these $6$-tuples. Note that $G$ acts on $\Omega$. Since $\Omega$ has $b_n$ elements, this gives a homomorphism $\varphi: G \rightarrow S_{b_n}$. Then, by the first homomorphism theorem, 
        \[
            \im{\varphi} \cong G / \ker{\varphi}.
        \]
        On the other hand, $\im{\varphi}$ is a subgroup of $S_{b_n}$ and it is not difficult to see that $\ker{\varphi}$ can be naturally embedded into $S_3^{b_n}$. Therefore, $G$ is an extension of a subgroup of $S_3^{b_n}$ by a subgroup of $S_{b_n}$ and $|G| \le |S_3^{b_n}| \cdot|S_{b_n}| = 6^{b_n} \cdot b_n!$.
    \end{proof}
	\section*{Acknowledgments}
	
	We express our sincere gratitude to Mihran Papikian, whose comments were of great help to us as novice researchers, and to Anush Tserunyan for useful remarks about the introduction. We also thank the anonymous referee for a careful reading of the manuscript and for many valuable suggestions. 
    
    \bibliographystyle{amsalpha}
    \bibliography{Bibliography.bib}

\end{document}